\theoremstyle{plain}
\newtheorem{theorem}{Theorem}[section]
\newtheorem*{theorem*}{Theorem}
\newtheorem{lemma}[theorem]{Lemma}
\theoremstyle{definition}
\newtheorem{definition}[theorem]{Definition}
\newtheorem{assumption}[theorem]{Assumption}
\def\text#1{\mbox{#1}}
\newcommand{\ra}{\rangle}
\newcommand{\la}{\langle}
\newcommand{\Cop}{\mathcal{C}}
\newcommand{\Mop}{\mathcal{M}}
\newcommand{\Rop}{\mathcal{R}}
\newcommand{\Qop}{\mathcal{Q}}
\newcommand{\Sop}{\mathcal{S}}
\newcommand{\Hsp}{\mathcal{H}}
\newcommand{\dVol}{\text{dV}}
\newcommand{\dS}{\text{dS}}
\begin{document}

\title{Photoacoustic imaging taking into account thermodynamic attenuation}

\author{Sebasti\'{a}n Acosta$^{1}$ and Carlos Montalto$^2$}
\address{$^1$ Department of Pediatrics -- Cardiology, Baylor College of Medicine and Texas Children's Hospital, Houston, TX, USA}

\address{$^2$ Department of Mathematics, University of Washington, Seattle, WA, USA}

\eads{\mailto{sacosta@bcm.edu} and \mailto{montcruz@uw.edu}}

\begin{abstract}
In this paper we consider a mathematical model for photoacoustic imaging which takes into account attenuation due to thermodynamic dissipation. The propagation of acoustic (compressional) waves is governed by a scalar wave equation coupled to the heat equation for the excess temperature. We seek to recover the initial acoustic profile from knowledge of acoustic measurements at the boundary. 

We recognize that this inverse problem is a special case of boundary observability for a thermoelastic system. This leads to the use of control/observability tools to prove the unique and stable recovery of the initial acoustic profile in the weak thermoelastic coupling regime.
This approach is constructive, yielding a solvable equation for the unknown acoustic profile. Moreover, the solution to this reconstruction equation can be approximated numerically using the conjugate gradient method. If certain geometrical conditions for the wave speed are satisfied, this approach is well--suited for variable media and for measurements on a subset of the boundary. We also present a numerical implementation of the proposed reconstruction algorithm.
\end{abstract}

\noindent{\it Keywords}: Thermoacoustic and photoacoustic imaging, observability estimates, medical imaging, hybrid and multiwave methods, attenuation, dissipation, damping.

\submitto{\IP}


\section{Introduction} \label{Section:Intro}

Photoacoustic tomography is an imaging technique that takes advantage of the high--contrast exhibited by optical absorption and the high--resolution carried by broadband acoustic waves in soft biological tissues. Details concerning this type of imaging modalities are found in \cite{Wang-Wu-2007,Wang-2009,Beard-2011,Cox-Laufer-Beard-2009, Anastasio-2011, Wang-2012,Wang-2003, Zhang-2007, Shan-2008, Pramanik-Wang-2009, Cox2012}. Qualitative photoacoustic imaging consists of recovering an initial pressure profile from acoustic measurements acquired on the boundary of a region of interest. The successful transformation of boundary measurements into the sought interior pressure profile requires mathematical algorithms that have been studied by numerous researchers. Some of them are based on explicit formulas valid for waves propagating in free--space and homogeneous media \cite{Finch2004,Kunyansky2007,Finch2007,Kunyansky2011,WangAnastasio2012,Natterer2012,Palamodov2012,Haltmeier2014}. Others seek to account for variable wave speed and/or the presence of boundaries \cite{Anastasio-2007, Hristova-et-al-2008, Hristova2009, Ste-Uhl-2009-01, Stefanov-Uhlmann-2011,kunyansky-holman-cox-2013, Tittelfitz-2012, HuangWaNie2013, WangXia2014, holman-kunyansky-2014, Acosta-Montalto-2015,stefanov-yang-2015,Nguyen2015,ChervovaOksanen2016,StefanovYang2016}. See also the reviews \cite{Agranovsy-2007,Kuchment-2008,Kuchment2011,Bal-2011} for additional references.

There have been recent efforts to incorporate acoustic attenuation in the modeling of photoacoustic tomography (PAT) and thermoacoustic tomography (TAT). See \cite{LaRiviere2009,Kowar-2010,Treeby2010b,Kowar-Scherzer-2012,Ammari-Bretin-2012,Treeby-2010,Roitner-2011,Cook-2011,Huang-2012,
Kalimeris-2013,Homan-2013,Kowar-2014,Palacios2016} and references therein. Most of these results aim at modeling attenuation in the frequency domain to account for dissipation and dispersion. In this paper, however, we adopt a model where the propagation of acoustic waves is thermodynamically coupled to the diffusion of heat. The photoacoustic effect, on which PAT is physically based, consists of two transformations of energy. First, electromagnetic energy is absorbed and transformed into heat. Second, there is a conversion of heat into mechanical energy due to thermal expansivity of the tissues. Concerning this second step, due to the thermodynamic interaction between temperature and pressure, the reverse transformation of energy also takes place. Since heat diffuses, this process attenuates the energy of the thermoacoustic waves. See \cite[Ch. 8]{Athanasiou-Book-2009} for an introduction to thermoelasticity in biomechanics. We claim that this type of attenuation should be naturally considered in PAT because PAT itself is based on the thermo--elastic interaction.

We realize that, in mathematical terms, the PAT problem coincides with a problem of \textit{boundary observability} --- the ability to determine the solution of a partial differential equation from knowledge of overdetermined boundary data (Dirichlet and Neumann).
This is one of the central concepts of control theory for partial differential equations \cite{Glow-Lions-He-2008,Bar-Leb-Rau-1992,GLLT-2004,Tucsnak-Weiss-Book-2009}. We have already employed similar tools to address the PAT problem in an enclosure \cite{Acosta-Montalto-2015} and other related problems \cite{Aco-2013,Acosta2015301}. The objective of this paper is to constructively employ the tools of observability for hyperbolic equations together with certain regularity properties of parabolic equations to solve the PAT problem in the presence of thermodynamic attenuation. For the thermoelastic system, there is a series of works on establishing exact, approximate and null controllability or observability estimates \cite{Zuazua-1994,deTeresa-1996,Lebeau-Zuazua-1998,Liu-1998,Liu-1998-2,Liu-Williams-1998,Eller2000,Albano-Tataru-2000,GLLT-2004}. In these works, however, either the boundary condition or the distribution of control/observation is not of the type we need to model the PAT problem. Therefore, we modify some ideas provided by these references to seek a solution for PAT in the weak coupling regime. Although it might be possible to use Carleman estimates from \cite{Eller2000,Albano-Tataru-2000,GLLT-2004} to treat the strongly coupled system, we refrain from doing so because the thermoelastic coupling in PAT is known to be relatively weak. Hence, we claim that the results of this paper are sufficient for the nature of PAT in biological tissues.

\section{Mathematical Formulation and Main Results} \label{Section:Formulation}

In this paper we study the photoacoustic tomography problem in the presence of thermodynamic dissipation. This is modeled by the linear equations of elasticity coupled with thermal diffusivity \cite{Athanasiou-Book-2009}. Let $\Omega \subset \mathbb{R}^{n}$ be a smooth bounded connected domain with boundary $\partial \Omega$ where $n \geq 2$. The propagation of thermoelastic waves in isotropic media is governed by the following system \cite[Ch. 8]{Athanasiou-Book-2009},
\begin{eqnarray*}
\fl \rho \partial_{t}^{2} \textbf{u} - \nabla \left( \lambda \, \text{div} \, \textbf{u} \right) - \text{div} \, \mu \left( \nabla \textbf{u} + \nabla \textbf{u}^{\rm T} \right) + \beta K \, \nabla \theta = 0, \qquad && \mbox{in} \quad (0,\tau) \times \Omega \label{Eqn:Displacement}  \\
\fl \partial_{t} \theta - \alpha \Delta \theta + \frac{\theta_{\rm ref} \beta K} {\rho c_{\rm p}}   \, \text{div} \, \partial_{t} \textbf{u} = 0, \qquad && \mbox{in} \quad (0,\tau) \times \Omega \label{Eqn:Temp} 
\end{eqnarray*}
for the displacement $\textbf{u}$ and where $\theta$ denotes the deviation from the reference temperature $\theta_{\rm ref}$. Also, $\rho$ is the mass density, $\lambda$ and $\mu$ are the Lam\'{e} coefficients, $\alpha$ denotes the thermal diffusivity and $c_{\rm p}$ is the specific heat at constant pressure. The thermoelastic coupling is given by $\beta K$ where $\beta$ is the coefficient of volumetric thermal expansion and $K$ is the bulk modulus. In soft biological tissue, $\lambda \gg \mu$ which implies that $K / (\lambda + 2 \mu) \approx 1$. For the moment let us assume that $\lambda$, $\mu$, $K$ and $\rho$ are constants in $\Omega$. Later, we will drop this assumption.

\begin{table}[!h]
\caption{\label{Table:Param} Physical parameters for the thermoacoustic coupling in typical soft biological tissues.}
\begin{indented}
\item[]\begin{tabular}{@{}llll}
\br
Physical Parameter & Symbol & Range & Units   \\
\mr
Bulk modulus & $K$ & 2000 -- 2500 & $10^{6}$ Pa \\
Density & $\rho$   & 900 -- 1100  & Kg / m$^3$ \\
Ref. Temperature  & $\theta_{\rm ref}$   & 290 -- 310 & K \\
Coeff. thermal expansion & $\beta$ & 200 -- 300 & $10^{-6}$ / K \\
Specific heat & $c_{\rm p}$ & 500 -- 5000 & J / (Kg K) \\
\br
\end{tabular}
\end{indented}
\end{table}

Since photoacoustic imaging is primarily concerned with the compressional waves, we define the pressure $p = - (\lambda + 2 \mu) \, \text{div} \, \textbf{u}$ and the square of the compressional wave speed $c^2 = (\lambda + 2 \mu) / \rho$ and proceed to obtain a scalar model for the thermoacoustic waves. Simultaneously, we seek to reveal the strength of the thermoelastic coupling by writing the governing equations in unitless form. Let $L$ be a characteristic length scale of the domain $\Omega$ (such as its diameter). Let the characteristic time scale be given by $T = L / c_{\rm ref}$ where $c^{2}_{\rm ref} = K/\rho$ is the square of a reference wave speed. We define the following unitless variables and parameters:
\begin{itemize}
\item[] Pressure $\hat{p} = p / K$ and temperature $\hat{\theta} = \theta / \theta_{\rm ref}$.
\item[] Length $\hat{x} = x / L$ and time $\hat{t} = t / T$. 
\item[] Parameters: $\hat{c}^2 = c^2 T^2 / L^2$, $\hat{\alpha} = \alpha T / L^2$, $\sigma = K / ( \theta_{\rm ref} \rho c_{\rm p})$ and $\epsilon = \beta \theta_{\rm ref}$.
\end{itemize}
The unitless coupling parameter $\epsilon > 0$ is introduced to analyze the case where $\epsilon$ is sufficiently small. This is valid for a small coefficient of thermal expansion $\beta$. The unitless product $\mathcal{G} = \epsilon \sigma$ is known as the Gr\"{u}neisen coefficient. Table \ref{Table:Param} 
displays rough estimates for the values of these physical parameters for soft biological tissues. We obtain that $0.05 \lesssim \epsilon \lesssim 0.1$ and $0.5 \lesssim \sigma \lesssim 10$.

For notational convenience, we assume that $\sigma = 1$. This presents no impediment to the theory as we could easily treat the case $\sigma > 0$. At this point, in order to alleviate the notation, we also drop the caret to denote the unitless quantities. The unitless scalar governing system then becomes, 
\begin{eqnarray}
\partial_{t}^{2} p - c^2 \Delta p - \epsilon \, c^2 \Delta \theta = 0 \qquad && \mbox{in} \quad (0,\tau) \times \Omega, \label{Eqn:Pressure01}  \\
\partial_{t} \theta - \alpha \Delta \theta - \epsilon \, \partial_{t} p = 0 \qquad && \mbox{in} \quad (0,\tau) \times \Omega. \label{Eqn:Temp01} 
\end{eqnarray}
We take (\ref{Eqn:Pressure01})--(\ref{Eqn:Temp01}) as the starting point for the modeling of this problem. While the equivalence of the thermoelastic system and equations (\ref{Eqn:Pressure01})--(\ref{Eqn:Temp01}) only holds when $\lambda$, $\mu$, $K$ and $\rho$ are constants, we will consider a positive wave speed $c \in C^2(\overline{\Omega})$. This follows the common practice of considering a variable wave speed to model heterogeneous media, even when the wave equation is not in divergence form. Similarly, we assume a positive thermal diffusivity $\alpha \in C^2(\overline{\Omega})$. The system (\ref{Eqn:Pressure01})--(\ref{Eqn:Temp01}) is augmented by the following initial and boundary conditions,
\begin{eqnarray}
p = p_{0}, \quad  \partial_{t} p = p_{1}  \quad \text{and} \quad \theta = \theta_{0} \qquad && \mbox{on} \quad \{ t = 0 \} \times \Omega, \label{Eqn:InitialCond01} \\
\partial_{\nu} p + \gamma \partial_{t} p  = 0 \quad \text{and} \quad \partial_{\nu} \theta = 0 \qquad && \mbox{on} \quad (0,\tau) \times \partial \Omega. \label{Eqn:BdyCond01}
\end{eqnarray}
Here, $\gamma :\partial \Omega \to [0,\infty)$ denotes the acoustic impedance coefficient at the boundary $\partial \Omega$. We assume that $\gamma \in C^2(\partial \Omega)$. Physically, $\gamma = 0$ models an acoustically hard surface (such as reflectors) and $\gamma \to \infty$ approximates an acoustically soft boundary. In general we allow $\gamma$ to vary on the boundary $\partial \Omega$ to model the heterogeneous nature of an enclosing surface and the interface with sensors or air. 
The length of the observation window of time is given by $\tau < \infty$ which is defined below. In (\ref{Eqn:BdyCond01}), the symbol $\partial_{\nu}$ denotes the outward normal derivative at the boundary $\partial \Omega$. 

Concerning the initial conditions, it is common in the modeling of photoacoustic tomography to assume the following \cite{XiaYaoWang2014}.
\begin{assumption}[\textbf{Rapid Deposition of Heat}] \label{Assump.FastHeatDepo} 
The initial conditions (\ref{Eqn:InitialCond01}) satisfy, 
\begin{eqnarray*}
p_{1} = 0 \qquad \text{and} \qquad \theta_{0} = \epsilon \, p_{0}. 
\end{eqnarray*}
\end{assumption}

These two conditions are respectively known as \textit{stress confinement} and \textit{thermal confinement} \cite{XiaYaoWang2014}. They are valid when the pressure relaxation and thermal diffusion are negligible in the very short lapse of heat deposition from the optical source. These conditions can be achieved in biological tissues by using nanosecond optical pulses \cite{Wang-Wu-2007,Wang-2009,Beard-2011,Cox-Laufer-Beard-2009,
Anastasio-2011,Wang-2012,XiaYaoWang2014,Kruger-1999,Larina-2005}. The assumption $\theta_{0} = \epsilon p_{0}$ is mathematically crucial because it removes an important degree of freedom in the analysis. It would not be possible to recover, in stable manner, an independent initial condition for the thermal field. This is a well--known consequence of the smoothing effect of the heat equation. See details in \cite{Zuazua-1994,deTeresa-1996,Lebeau-Zuazua-1998,Liu-Williams-1998,Eller2000,Albano-Tataru-2000}.

In order to consider partial measurements, we divide the boundary as the disjoint union $\partial \Omega = \Gamma \cup (\partial \Omega \setminus \Gamma)$ where $\Gamma$ is the portion where we make observations of the acoustic field. We also assume that $\{ x \in \partial \Omega : \gamma(x) > 0 \} \subset \Gamma$ so that the absorptive part of the boundary (where $\gamma > 0$) is contained within the observable part of the boundary. As reviewed in the next section, the forward problem (\ref{Eqn:Pressure01})--(\ref{Eqn:BdyCond01}) has a unique solution, and we can define the \textit{measurement} map given by
\begin{eqnarray}
\Mop p_{0} = p|_{(0,\tau) \times \Gamma} \label{Eqn:MeasMap}
\end{eqnarray}
where $p$ is the solution of (\ref{Eqn:Pressure01})--(\ref{Eqn:BdyCond01}) with initial conditions satisfying Assumption \ref{Assump.FastHeatDepo}.
The goal of the photoacoustic tomography problem is to find the initial profile $p_{0}$ from knowledge of $\Mop p_{0}$. This is a challenging problem with intricate dependencies between the domain $\Omega$, the partial boundary $\Gamma$, the wave speed $c$ and the time interval $(0,\tau)$. The admissible dependencies are made precise by a sophisticated assumption of geometric character. Following Bardos, Lebeau and Rauch \cite{Bar-Leb-Rau-1992}, we assume that our problem enjoys the \textit{geometric control condition} for the Riemannian manifold $(\Omega, c^{-2} dx^2)$ with only the portion $\Gamma$ of the boundary $\partial \Omega$ being accessible for observation. 
We assume that $\Gamma$ is a smooth open domain relative to $\partial \Omega$ and that all the geodesics of $(\Omega, c^{-2} dx^2)$ have finite contact order with the boundary $\partial \Omega$. Under this condition, the geodesic rays of $(\Omega, c^{-2} dx^2)$ can be uniquely extended when they encounter the boundary $\partial \Omega$. See mathematical details in \cite{Bar-Leb-Rau-1992}. The geometric assumption needed for our main result is the following.

\begin{assumption}[\textbf{Geometric Condition}] \label{Assump.001} 
There exists $\tau_{\rm o} < \infty$ such that any geodesic ray of the manifold $(\Omega, c^{-2} dx^2)$, originating from any point in $\Omega$ at $t=0$, eventually reaches $\Gamma$ at a non--diffractive point (after possible geometrical reflections on $\partial \Omega \setminus \Gamma$) before time $t=\tau_{\rm o}$. Also assume that $\tau > \tau_{\rm o}$.
\end{assumption}

A geodesic ray is non--diffractive if, in the absence of the boundary, the ray leaves $\Omega$. See the precise mathematical definition in \cite{Bar-Leb-Rau-1992}. In physical terms, Assumption \ref{Assump.001} means that acoustic signals have an interaction with the boundary $\Gamma$ strong enough for all acoustic signals to deliver a non--negligible amount of their energy to the boundary.

The main theoretical result of this paper is that even though the thermodynamic attenuation affects the acoustic waves, the pressure measurements acquired on the boundary $(0,\tau) \times \Gamma$ are sufficient to stably recover the initial state of the pressure field (provided that the thermoelastic coupling is sufficiently weak). We make this statement precise in the form of a theorem.
\begin{theorem}[\textbf{Main Result}] \label{Thm.MainResult}
Suppose that Assumptions \ref{Assump.FastHeatDepo} and \ref{Assump.001} hold. There exists $\epsilon_{\rm o} > 0$ so that if $0 \leq \epsilon < \epsilon_{\rm o}$, the map $p_{0} \mapsto \Mop p_{0}$ is injective and satisfies a stability estimate of the form
\begin{eqnarray*}
\|  p_{0} \|_{H^{1}(\Omega)} \leq C \|  \Mop p_{0} \|_{H^{1}((0,\tau) \times \Gamma)},
\end{eqnarray*}
for all $p_{0} \in H^{1}(\Omega)$, and some positive constant $C=C(\epsilon_{\rm o})$ independent of $\epsilon$.
\end{theorem}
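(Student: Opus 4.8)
The plan is to treat the thermoelastic coupling as a small perturbation of the pure acoustic problem and to reduce the claim to the classical boundary observability estimate for the wave equation under the geometric control condition of Assumption~\ref{Assump.001}. Writing $\Sigma = (0,\tau)\times\Gamma$ for brevity and $\Mop_{\epsilon}$ for the measurement map of the coupled system (\ref{Eqn:Pressure01})--(\ref{Eqn:BdyCond01}), let $w$ denote the solution of the decoupled impedance wave problem (the case $\epsilon=0$) with the same Cauchy data $(p_{0},0)$. By Bardos--Lebeau--Rauch \cite{Bar-Leb-Rau-1992} (see also \cite{Acosta-Montalto-2015}), Assumption~\ref{Assump.001} yields a base observability inequality $\| p_{0} \|_{H^{1}(\Omega)} \le C_{0}\, \| w|_{\Sigma} \|_{H^{1}(\Sigma)}$ for the wave equation alone, with $C_{0}$ independent of $\epsilon$. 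The whole argument then hinges on a perturbation estimate
\begin{eqnarray*}
\| \Mop_{\epsilon} p_{0} - w|_{\Sigma} \|_{H^{1}(\Sigma)} \le C\,\epsilon\,\| p_{0} \|_{H^{1}(\Omega)}.
\end{eqnarray*}
Granting this, the triangle inequality gives $\| p_{0} \|_{H^{1}} \le C_{0}\| \Mop_{\epsilon} p_{0} \|_{H^{1}(\Sigma)} + C_{0}C\epsilon\| p_{0} \|_{H^{1}}$, and choosing $\epsilon_{\rm o}$ so that $C_{0}C\epsilon_{\rm o}\le 1/2$ lets me absorb the last term into the left-hand side, yielding injectivity and the stated stability with $C=2C_{0}$ uniform in $\epsilon$.

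To produce the perturbation estimate I would proceed in three steps. First, I would establish an a priori energy bound for the coupled system that is uniform for $\epsilon\in[0,\epsilon_{\rm o})$: multiplying (\ref{Eqn:Pressure01}) by $c^{-2}\partial_{t}p$ and (\ref{Eqn:Temp01}) by an appropriately weighted multiplier and invoking the dissipative structure of the boundary conditions in (\ref{Eqn:BdyCond01}), the coupling terms carry the small parameter and one obtains $\sup_{t} E(t) \le C\,\| p_{0} \|^{2}_{H^{1}(\Omega)}$, where Assumption~\ref{Assump.FastHeatDepo} is used to express all initial data through $p_{0}$. Second, treating (\ref{Eqn:Temp01}) as a forced parabolic problem with source $\epsilon\,\partial_{t}p$ and Neumann data $\theta_{0}=\epsilon p_{0}\in H^{1}(\Omega)$, maximal parabolic regularity delivers $\theta \in L^{2}(0,\tau;H^{2}(\Omega))\cap H^{1}(0,\tau;L^{2}(\Omega))$ with a bound carrying the decisive factor $\epsilon$, namely $\| \theta \|_{L^{2}(H^{2})} + \| \partial_{t}\theta \|_{L^{2}(L^{2})} \le C\epsilon\| p_{0} \|_{H^{1}(\Omega)}$; here the energy bound of the first step (which gives $\partial_{t}p\in L^{2}(L^{2})$) closes the apparent circular dependence of $\theta$ on $p$. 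Third, the difference $v=p-w$ solves the same impedance wave equation with zero Cauchy data and forcing $f=\epsilon c^{2}\Delta\theta$, which by the second step lies in $L^{2}((0,\tau)\times\Omega)$ with $\| f \| \le C\epsilon\| p_{0} \|_{H^{1}(\Omega)}$.

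The main obstacle is this last step: bounding the boundary trace $v|_{\Sigma}$ in the full $H^{1}(\Sigma)$ norm demanded by the measurement operator, even though the interior regularity of $v$ is only at the energy level. I would resolve this through sharp trace (hidden) regularity for second-order hyperbolic equations, exploiting two features specific to this setting. The boundary dissipation identity forces $\sqrt{\gamma}\,\partial_{t}p \in L^{2}((0,\tau)\times\partial\Omega)$, so the time derivative of the trace is controlled on the absorbing set $\{\gamma>0\}\subseteq\Gamma$; and the parabolic smoothing from the second step endows the forcing $f$ with extra time regularity for $t>0$, which can be transferred to the trace after differentiating the $v$-equation in time and re-estimating. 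It is convenient here to rewrite the source, using (\ref{Eqn:Temp01}), as $\epsilon c^{2}\Delta\theta = (\epsilon c^{2}/\alpha)(\partial_{t}\theta - \epsilon\,\partial_{t}p)$, trading the second spatial derivative of $\theta$ for the parabolically controlled quantity $\partial_{t}\theta$ together with an extra damping term that can be moved to the left-hand side. Assembling these ingredients yields the $O(\epsilon)$ trace bound and completes the argument; the smallness of $\epsilon$ is used both to close the coupled regularity estimates and to perform the final absorption.
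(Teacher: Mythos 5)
Your overall architecture --- uniform energy bound, maximal parabolic regularity putting an $O(\epsilon)$ factor on $\Delta\theta$, the Bardos--Lebeau--Rauch observability under Assumption \ref{Assump.001}, and a final absorption for small $\epsilon$ --- matches the paper's proof. But you route the perturbation through the boundary, and that is where the argument breaks. The paper never compares traces: it invokes a perturbed observability inequality for the forced wave equation,
\begin{eqnarray*}
\| p_{0} \|^{2}_{H^{1}(\Omega)} \leq C \left( \epsilon^2 \| \Delta \theta \|^{2}_{H^{0}((0,\tau) \times \Omega)} + \| p \|^{2}_{H^{1}((0,\tau) \times \Gamma)} \right),
\end{eqnarray*}
in which the boundary term is the trace of the \emph{actual} solution $p$ (i.e.\ exactly $\Mop p_0$) and the forcing $\epsilon c^2\Delta\theta$ enters only through an \emph{interior} $L^2$ norm. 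Such inequalities are standard (they come from running the multiplier/microlocal argument directly on the inhomogeneous equation, so the source is integrated against the multiplier in the interior) and are what the paper cites from Tucsnak--Weiss, Glowinski--Lions--He and Alabau-Boussouira. Your version instead demands the estimate $\| \Mop_{\epsilon} p_{0} - w|_{\Sigma} \|_{H^{1}(\Sigma)} \le C\epsilon \| p_{0} \|_{H^{1}(\Omega)}$, i.e.\ an $H^{1}(\Sigma)$ bound on the Dirichlet trace of the solution $v=p-w$ of an impedance/Neumann wave problem with zero Cauchy data and forcing $f=\epsilon c^{2}\Delta\theta$ that is merely $L^{2}((0,\tau)\times\Omega)$. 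That is a genuinely stronger statement than any available hidden-regularity result: energy theory gives $v\in C([0,\tau];H^{1}(\Omega))\cap C^{1}([0,\tau];L^{2}(\Omega))$, whose trace lies in $C([0,\tau];H^{1/2}(\partial\Omega))$ but whose time derivative has no boundary trace at all, and the sharp Lasiecka--Triggiani trace results for the Neumann problem stop well short of $H^{1}(\Sigma)$.

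Your two proposed repairs do not close this. The bound $\sqrt{\gamma}\,\partial_{t}p\in L^{2}((0,\tau)\times\partial\Omega)$ controls $\partial_t v$ only on $\{\gamma>0\}$, which the paper assumes to be \emph{contained in} $\Gamma$, not equal to it, and in any case says nothing about the tangential derivatives of the trace. And the claimed ``extra time regularity of $f$ for $t>0$'' is unfounded: the parabolic forcing $\epsilon\,\partial_t p$ is only $L^2$ in time, so maximal regularity gives $\Delta\theta\in L^{2}((0,\tau)\times\Omega)$ and nothing smoother; moreover, differentiating the $v$-equation in time introduces the initial value $f(0)=\epsilon^{2}c^{2}\Delta p_{0}$, which for $p_0\in H^1(\Omega)$ lives only in $H^{-1}(\Omega)$. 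The fix is simply to abandon the trace comparison and use the source-term form of the observability inequality, as the paper does; then your first two steps (the uniform energy estimate and the parabolic regularity estimate, both of which are correct and coincide with the paper's inequalities) combine with it to give $\| p_{0} \|^{2}_{H^{1}}\le C(\| \Mop p_{0} \|^{2}_{H^{1}(\Sigma)}+\epsilon^{4}(2+\epsilon^{2})\|\nabla p_{0}\|^{2})$ and the absorption goes through.
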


By contrast to existing results for the observability/controllability of solutions to thermoelastic equations, we highlight that the conclusion of  Theorem \ref{Thm.MainResult} is true without having to observe the temperature field $\theta$ on the boundary $\Gamma$. However, this novel result depends critically on the thermal confinement assumption --- that the initial temperature profile is proportional to the initial pressure profile. See Assumption \ref{Assump.FastHeatDepo}. 

The proof of Theorem \ref{Thm.MainResult} is presented in Section \ref{Section:Proof}.
In addition to this theoretical result, we also propose a convergent iterative reconstruction algorithm which is described in Section \ref{Section:Reconst}.

\section{Proof of the Main Result} \label{Section:Proof}

In order to properly analyze the inverse problem we must first state some mathematical properties of the initial boundary value problem (\ref{Eqn:Pressure01})--(\ref{Eqn:BdyCond01}). Our guiding references are \cite{Lebeau-Zuazua-1998,Lio-Mag-Book-1972,EvansPDE,Ren-Rog-2004,Eng-Nag-2000,Dafermos1968,Lebeau1997}. We denote by $H^k(\Omega)$ for $k\in \mathbb{Z}$ the Sobolev space of order $k$ over $L^2(\Omega)$. Notice, $H^0(\Omega) = L^2(\Omega)$. See \cite[\S 5.2 -- \S 5.9]{EvansPDE} for an introduction to Sobolev spaces as well as the Bochner spaces $H^{j}((0,\tau); H^{k}(\Omega))$ and $C^{j}((0,\tau); H^{k}(\Omega))$ which employ in this Section. We use the following definition of energy for the thermoacoustic system,
\begin{eqnarray*}
 E(t) = \frac{1}{2} \int_{\Omega} \left( |\nabla p(t,x)|^2 + c^{-2}(x) |\partial_{t} p(t,x)|^2 + |\nabla \theta(t,x)|^2  \right) \dVol(x).
\end{eqnarray*}
for any triplet $(p(t), \partial_t p(t), \theta(t)) \in H^1(\Omega) \times H^{0}(\Omega) \times H^1(\Omega)$. Notice that any pair $p = \text{const}$ and $\theta = \text{const}$ is a solution of (\ref{Eqn:Pressure01}), (\ref{Eqn:Temp01}) and (\ref{Eqn:BdyCond01}) with zero energy. There are infinitely many nonzero constant solutions with vanishing energy which implies that the energy norm does not identify solutions uniquely. This can be remedied by considering only solutions that satisfy the following conditions
\begin{eqnarray}
\int_{\Omega} c^{-2}(x) \partial_{t} p(t,x) \, \dVol(x) + \int_{\partial \Omega} \gamma(x) p(t,x) \, \dS(x) = 0, \label{Eqn:AvgCond01} \\
\int_{\Omega}  \left( \theta(t,x) - \epsilon p(t,x) \right) \, \dVol(x) = 0. \label{Eqn:AvgCond02}
\end{eqnarray}
This is motivated by the fact that the left--hand sides of (\ref{Eqn:AvgCond01})--(\ref{Eqn:AvgCond02}) are indeed independent of time provided that $p$ and $\theta$ solve the governing equations (\ref{Eqn:Pressure01})--(\ref{Eqn:Temp01}) with boundary conditions (\ref{Eqn:BdyCond01}). Therefore, it is only needed to require (\ref{Eqn:AvgCond01})--(\ref{Eqn:AvgCond02}) at time $t=0$. Then the appropriate energy space is given by
\begin{eqnarray*}
\Hsp := \left\{ (p_{0},p_{1},\theta_{0}) \in H^{1}(\Omega) \times H^{0}(\Omega) \times H^{1}(\Omega) : \text{(\ref{Eqn:AvgCond01})--(\ref{Eqn:AvgCond02}) are satisfied}  \right\}.
\end{eqnarray*}
Notice that $\Hsp$ is a closed subspace of $H^{1}(\Omega) \times H^{0}(\Omega) \times H^{1}(\Omega)$. So it is complete under the norm of $H^{1}(\Omega) \times H^{0}(\Omega) \times H^{1}(\Omega)$ as well as under the energy norm. 
We seek a weak solution $(p,\theta)$ of (\ref{Eqn:Pressure01})--(\ref{Eqn:BdyCond01}) in the space $\Hsp$. We say that the functions $p \in H^{k}((0,\tau);H^{1-k}(\Omega))$ for $k=0,1,2$ and $\theta \in H^{j}((0,\tau);H^{1-j}(\Omega))$ for $j=0,1$ are a \textit{weak} solution of (\ref{Eqn:Pressure01})--(\ref{Eqn:BdyCond01}) provided that
\begin{eqnarray*}
\la c^{-2} \partial^{2}_{t} p(t) , v \ra_{\Omega} + \la \nabla p(t) , \nabla v \ra_{\Omega} + \epsilon \la \nabla \theta(t) , \nabla v \ra_{\Omega} + \la \gamma \partial_{t} p(t) , v \ra_{\partial \Omega} = 0, \\
\la \alpha^{-1} \partial_{t} \theta(t) , \vartheta \ra_{\Omega} + \la \nabla \theta(t) , \nabla \vartheta \ra_{\Omega} - \epsilon \la \alpha^{-1} p(t) , \vartheta \ra_{\Omega} = 0,
\end{eqnarray*}
for all $v,\vartheta \in H^{1}(\Omega)$ and $t \in (0,\tau)$, such that $(p(0),\partial_{t}p(0),\theta(0)) = (p_{0},p_{1},\theta_{0}) \in \Hsp$.

The problem (\ref{Eqn:Pressure01})--(\ref{Eqn:BdyCond01}) is well--posed on the space $\Hsp$ and the energy is non--increasing. The proof follows from the standard analysis of partial differential equations and semigroup theory \cite{Lio-Mag-Book-1972,EvansPDE,Ren-Rog-2004,Eng-Nag-2000,Lebeau-Zuazua-1998}. The well--posedness can be established using energy estimates (see details in \cite[Ch 7]{EvansPDE} and \cite{Dafermos1968}) or by expressing the governing equations (\ref{Eqn:Pressure01})--(\ref{Eqn:Temp01}) as a system with first--order time derivatives to analyze the spectral properties of the corresponding infinitesimal generator for the strongly continuous semigroup. 
The details of this latter approach for the thermoacoustic system (\ref{Eqn:Pressure01})--(\ref{Eqn:Temp01}) are found in \cite[\S 2]{Lebeau-Zuazua-1998} and \cite[Ch 2--3]{Tucsnak-Weiss-Book-2009}. We state this in the form of a lemma.

\begin{lemma} \label{Lemma.Forward}
Given $(p_{0},p_{1},\theta_{0}) \in \Hsp$, the unique weak solution of (\ref{Eqn:Pressure01})--(\ref{Eqn:BdyCond01}) satisfies $(p,\partial_{t} p,\theta) \in \Hsp$ for $t \geq 0$, and $p \in C^{k}([0,\tau] ; H^{1-k}(\Omega))$ for $k=0,1$ and $\theta \in C([0,\tau];H^{1}(\Omega))$. 
Moreover, $E(t) \leq E(0)$ for all $t \geq 0$ and all $\epsilon \geq 0$. In fact (due to parabolic regularity) the energy $E \in H^{1}(0,\tau)$ and
\begin{eqnarray*}
 \frac{d E}{dt}(t) = -  \int_{\Omega} \alpha |\Delta \theta(t,x)|^2 \dVol(x) - \int_{\partial \Omega} \gamma(x) |\partial_{t} p(t,x)|^2 \dS(x).
\end{eqnarray*}
\end{lemma}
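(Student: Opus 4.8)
The plan is to recast the initial boundary value problem (\ref{Eqn:Pressure01})--(\ref{Eqn:BdyCond01}) as a first--order abstract Cauchy problem $\dot U = \Aop U$ on $\Hsp$, with state $U = (p,\partial_t p,\theta)$ and an unbounded operator $\Aop$ whose domain $D(\Aop)$ encodes the impedance and Neumann conditions of (\ref{Eqn:BdyCond01}). Equipping $\Hsp$ with the energy inner product (whose quadratic form is $E$), I would verify that $\Aop$ is dissipative and maximal, and then invoke the Lumer--Phillips theorem, exactly as in \cite[\S 2]{Lebeau-Zuazua-1998} and \cite[Ch. 2--3]{Tucsnak-Weiss-Book-2009}, to conclude that $\Aop$ generates a strongly continuous semigroup of contractions on $\Hsp$. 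This delivers at once the existence and uniqueness of the weak solution, the contraction bound $E(t)\le E(0)$ for every $\epsilon\ge 0$, and the stated time regularity: strong solutions for data in $D(\Aop)$ and, by density, the asserted membership $p \in C^{k}([0,\tau];H^{1-k}(\Omega))$ for $k=0,1$ and $\theta\in C([0,\tau];H^1(\Omega))$ for general data in $\Hsp$.

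The arithmetic heart of the lemma is the dissipation identity, which I would first establish for smooth solutions and then extend by density. Differentiating $E$ and inserting $c^{-2}\partial_t^2 p = \Delta p + \epsilon\Delta\theta$ from (\ref{Eqn:Pressure01}) and $\partial_t\theta = \alpha\Delta\theta + \epsilon\partial_t p$ from (\ref{Eqn:Temp01}) yields
\[
\frac{dE}{dt} = \int_\Omega \left( \nabla p\cdot\nabla\partial_t p + \partial_t p\,(\Delta p + \epsilon\Delta\theta) + \nabla\theta\cdot\nabla\partial_t\theta \right)\dVol.
\]
Integrating the two gradient terms by parts and imposing $\partial_\nu p = -\gamma\partial_t p$ and $\partial_\nu\theta = 0$ from (\ref{Eqn:BdyCond01}), the two occurrences of $\int_\Omega \partial_t p\,\Delta p\,\dVol$ cancel and the surviving boundary contribution is $-\int_{\partial\Omega}\gamma|\partial_t p|^2\,\dS$. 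Substituting $\partial_t\theta$ once more into $-\int_\Omega \partial_t\theta\,\Delta\theta\,\dVol$ produces $-\int_\Omega\alpha|\Delta\theta|^2\,\dVol - \epsilon\int_\Omega\partial_t p\,\Delta\theta\,\dVol$, and this last coupling integral exactly annihilates the term $\epsilon\int_\Omega\partial_t p\,\Delta\theta\,\dVol$ inherited from the pressure equation. The claimed formula follows, with the coupling contributions cancelling so that dissipation is governed solely by thermal diffusion and boundary absorption. I would emphasize that this cancellation is precisely the computation $\mathrm{Re}\,\la\Aop U,U\ra_{\Hsp}\le 0$ underlying the dissipativity step, so the two parts of the argument share a common kernel.

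Next I would confirm that the flow preserves the constraint space $\Hsp$. Integrating the pressure equation over $\Omega$, invoking $c^{-2}\partial_t^2 p = \Delta p + \epsilon\Delta\theta$, and using both boundary conditions shows that the time derivative of the functional in (\ref{Eqn:AvgCond01}) vanishes; an analogous integration of the temperature equation, using $\partial_\nu\theta=0$, shows the same for (\ref{Eqn:AvgCond02}). Hence these functionals are constant along solutions, and since they vanish at $t=0$ they vanish for all $t$, giving $(p,\partial_t p,\theta)\in\Hsp$ for $t\ge 0$.

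I expect the main obstacle to be the maximality (range condition) for $\Aop$ together with the rigorous justification of the dissipation identity at the low regularity of weak solutions. Maximality reduces to solving a coupled elliptic resolvent system $(\lambda I-\Aop)U = F$ for some $\lambda>0$; this is standard, but the variational formulation must incorporate the impedance term $\gamma\partial_t p$ correctly. The identity itself is delicate because $\Delta p$ need not belong to $H^0(\Omega)$ when the pressure datum is merely $H^1$, so $\int_\Omega\partial_t p\,\Delta p\,\dVol$ cannot be integrated by parts termwise; the remedy is to derive the identity from the weak formulation with $v=\partial_t p$, where $\Delta p$ never appears in isolation and the coupling enters only through $\pm\,\epsilon\la\nabla\theta,\nabla\partial_t p\ra_\Omega$, which cancels between the two equations. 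For the thermal term I would invoke the parabolic smoothing effect to place $\theta(t)$ in $H^2(\Omega)$ for a.e.\ $t>0$, so that $\Delta\theta(t)\in H^0(\Omega)$, the integral $\int_\Omega\alpha|\Delta\theta|^2\,\dVol$ is finite, and the rewriting $\la\nabla\theta,\nabla\partial_t\theta\ra_\Omega = -\la\Delta\theta,\partial_t\theta\ra_\Omega$ is legitimate. A density argument from $D(\Aop)$ then transfers both the identity and the conclusion $E\in H^1(0,\tau)$ to all of $\Hsp$.
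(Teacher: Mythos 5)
Your proposal is correct and follows essentially the same route the paper takes: the paper proves this lemma by exactly the semigroup/Lumer--Phillips argument it cites from \cite[\S 2]{Lebeau-Zuazua-1998} and \cite[Ch 2--3]{Tucsnak-Weiss-Book-2009}, together with the energy computation and parabolic regularity you spell out, and your dissipation identity (with the two $\epsilon$-coupling integrals cancelling) checks out. One shared imprecision worth noting: for variable $\alpha$ the time derivative of the functional in (\ref{Eqn:AvgCond02}) is $\int_{\Omega}\alpha\Delta\theta\,\dVol$, which vanishes only for constant $\alpha$ (the exactly conserved quantity is the $\alpha^{-1}$-weighted average), but this is present in the paper's own statement of the invariants and does not affect the rest of the argument.
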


The well--posedness and regularity implications of Lemma \ref{Lemma.Forward} also apply to arbitrary initial conditions $(p_{0},p_{1},\theta_{0}) \in H^{1}(\Omega) \times H^{0}(\Omega) \times H^{1}(\Omega)$. By virtue of linearity, we can decompose the initial conditions as follows,
\begin{eqnarray*}
p_{0} = \left( p_{0} - p_{0,\rm const} \right) + p_{0,\rm const} \\
\theta_{0} = \left( \theta_{0} - \theta_{0,\rm const} \right) + \theta_{0,\rm const}
\end{eqnarray*}
where 
\begin{eqnarray*}
\fl p_{0,\rm const} =  \left( \int_{\partial \Omega} \gamma(x) \dS(x) \right)^{-1} \left( \int_{\Omega} c^{-2}(x) p_{1}(x) \dVol(x) + \int_{\partial \Omega} \gamma(x) p_{0}(x) \dS(x) \right) \\ 
\fl \theta_{0,\rm const} =  \frac{1}{|\Omega|} \int_{\Omega} \left( \theta_{0}(x) - \epsilon p_{0}(x) \right) \dVol(x) + \epsilon p_{0,\rm const}.
\end{eqnarray*}
Therefore, the evolution of the triplet $(p_{0},p_{1},\theta_{0}) \in H^{1}(\Omega) \times H^{0}(\Omega) \times H^{1}(\Omega)$ can be decomposed into the evolution of the initial condition $(p_{0}-p_{0,\rm const},p_{1},\theta_{0}-\theta_{0,\rm const})$ in $\Hsp$ plus a time--independent solution given by $(p_{0,\rm const},0,\theta_{0,\rm const})$ for $t \geq 0$. Notice that the energy of this particular solution is zero because it is constant both in space and time. This leads to the following result using Lemma \ref{Lemma.Forward} and the decomposition described above.

\begin{theorem}[\textbf{Forward well--posedness}] \label{Thm.Forward}
Given $(p_{0},p_{1},\theta_{0}) \in H^{1}(\Omega) \times H^{0}(\Omega) \times H^{1}(\Omega)$, the unique weak solution of (\ref{Eqn:Pressure01})--(\ref{Eqn:BdyCond01}) satisfies $p \in C^{k}([0,\tau] ; H^{1-k}(\Omega))$ for $k=0,1$ and $\theta \in C([0,\tau];H^{1}(\Omega))$. 
Moreover, $E(t) \leq E(0)$ for all $t \geq 0$ and all $\epsilon \geq 0$. 
\end{theorem}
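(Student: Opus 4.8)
The plan is to leverage the linearity of the system together with the explicit affine decomposition displayed above, thereby reducing the statement for arbitrary data in $H^{1}(\Omega) \times H^{0}(\Omega) \times H^{1}(\Omega)$ to the already-established Lemma \ref{Lemma.Forward} on the constrained space $\Hsp$. Writing $(p_{0},p_{1},\theta_{0}) = (\tilde{p}_{0},p_{1},\tilde{\theta}_{0}) + (p_{0,\rm const},0,\theta_{0,\rm const})$, where $\tilde{p}_{0} = p_{0}-p_{0,\rm const}$ and $\tilde{\theta}_{0} = \theta_{0}-\theta_{0,\rm const}$, I would first observe that a pair of constants $(p_{0,\rm const},\theta_{0,\rm const})$ generates the time-independent weak solution $(p,\theta) \equiv (p_{0,\rm const},\theta_{0,\rm const})$ of (\ref{Eqn:Pressure01})--(\ref{Eqn:BdyCond01}): every spatial derivative and every time derivative annihilates it, so both governing equations and both boundary conditions in (\ref{Eqn:BdyCond01}) hold trivially. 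This handles the constant summand for free.

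Next I would verify that the shifted datum $(\tilde{p}_{0},p_{1},\tilde{\theta}_{0})$ actually belongs to $\Hsp$, i.e.\ that it satisfies the two average constraints (\ref{Eqn:AvgCond01})--(\ref{Eqn:AvgCond02}) at $t=0$. This is a direct substitution: inserting $\partial_{t}p(0)=p_{1}$ and $p(0)=\tilde{p}_{0}$ into (\ref{Eqn:AvgCond01}) produces $\int_{\Omega} c^{-2} p_{1}\,\dVol + \int_{\partial\Omega}\gamma p_{0}\,\dS - p_{0,\rm const}\int_{\partial\Omega}\gamma\,\dS$, which vanishes precisely because of the definition chosen for $p_{0,\rm const}$ (this is where the hypothesis $\int_{\partial\Omega}\gamma\,\dS>0$ is used so that $p_{0,\rm const}$ is well defined). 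Likewise, substituting $\tilde{\theta}_{0}-\epsilon\tilde{p}_{0}$ into (\ref{Eqn:AvgCond02}) collapses to $\int_{\Omega}(\theta_{0}-\epsilon p_{0})\,\dVol - |\Omega|(\theta_{0,\rm const}-\epsilon p_{0,\rm const})$, which is zero by the definition of $\theta_{0,\rm const}$. Hence Lemma \ref{Lemma.Forward} applies to $(\tilde{p}_{0},p_{1},\tilde{\theta}_{0})$, yielding a weak solution $(\tilde{p},\tilde{\theta})$ with $\tilde{p}\in C^{k}([0,\tau];H^{1-k}(\Omega))$ for $k=0,1$ and $\tilde{\theta}\in C([0,\tau];H^{1}(\Omega))$ and with $\tilde{E}(t)\le \tilde{E}(0)$.

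Finally I would assemble the full solution as $p = \tilde{p} + p_{0,\rm const}$ and $\theta = \tilde{\theta} + \theta_{0,\rm const}$ and read off the two assertions. Since $\Omega$ is bounded, the constants lie in $H^{1}(\Omega)$ and define maps that are smooth in time, so adding them preserves the regularity classes $C^{k}([0,\tau];H^{1-k}(\Omega))$ and $C([0,\tau];H^{1}(\Omega))$; uniqueness of the weak solution for the full data then identifies this sum with the solution of the theorem. The point that needs slight care --- and what I would flag as the only genuine subtlety --- is the energy inequality, because $E$ is quadratic and the energies of superposed solutions do not simply add. Here the structure of $E$ saves the argument: it involves only $\nabla p$, $\partial_{t}p$ and $\nabla\theta$, each of which kills the constant summand, so that $\nabla p=\nabla\tilde{p}$, $\partial_{t}p=\partial_{t}\tilde{p}$ and $\nabla\theta=\nabla\tilde{\theta}$ pointwise. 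Consequently all cross terms vanish and $E(t)=\tilde{E}(t)$ identically, whence $E(t)=\tilde{E}(t)\le\tilde{E}(0)=E(0)$ for every $t\ge 0$ and every $\epsilon\ge 0$, completing the reduction.
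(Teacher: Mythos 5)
Your proposal is correct and follows essentially the same route as the paper: the authors also decompose the data into a constant triplet $(p_{0,\rm const},0,\theta_{0,\rm const})$, which generates a time--independent zero--energy solution, plus a remainder lying in $\Hsp$ to which Lemma \ref{Lemma.Forward} applies. Your explicit verification of the constraints (\ref{Eqn:AvgCond01})--(\ref{Eqn:AvgCond02}) and of the energy identity $E(t)=\tilde{E}(t)$ simply spells out details the paper leaves implicit.
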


Now we proceed to re--state and prove Theorem \ref{Thm.MainResult}.

\begin{theorem*}[\textbf{Main Result}] 
Suppose that Assumptions \ref{Assump.FastHeatDepo} and \ref{Assump.001} hold. There exists $\epsilon_{\rm o} > 0$ so that if $0 \leq \epsilon < \epsilon_{\rm o}$, the map $p_{0} \mapsto \Mop p_{0}$ is injective and satisfies a stability estimate of the form
\begin{eqnarray*}
\|  p_{0} \|_{H^{1}(\Omega)} \leq C \|  \Mop p_{0} \|_{H^{1}((0,\tau) \times \Gamma)},
\end{eqnarray*}
for all $p_{0} \in H^{1}(\Omega)$, and some positive constant $C=C(\epsilon_{\rm o})$ independent of $\epsilon$.
\end{theorem*}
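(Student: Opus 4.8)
The plan is to treat the thermoacoustic measurement $\Mop$ as an $O(\epsilon^2)$ perturbation of the purely acoustic measurement obtained at $\epsilon=0$, and to transfer the known boundary observability of the decoupled wave equation to the coupled system. When $\epsilon=0$ the system uncouples and the pressure $p^{0}$ solves $c^{-2}\partial_t^2 p^0 - \Delta p^0 = 0$ with the impedance condition $\partial_\nu p^0 + \gamma\,\partial_t p^0 = 0$ and data $(p_0,0)$. Under the geometric control condition of Assumption~\ref{Assump.001}, this equation is boundary observable from $\Gamma$, and I would first record the estimate $\|p_0\|_{H^1(\Omega)} \le C_0\,\|\Mop_0 p_0\|_{H^1((0,\tau)\times\Gamma)}$, where $\Mop_0 p_0 = p^0|_{(0,\tau)\times\Gamma}$ is the decoupled measurement. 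This inequality, together with the boundedness of the direct map $\Mop_0 : H^1(\Omega)\to H^1((0,\tau)\times\Gamma)$, is precisely the Bardos--Lebeau--Rauch observability and trace regularity for the wave equation under Assumption~\ref{Assump.001}, and I would quote it from \cite{Bar-Leb-Rau-1992} and the authors' prior treatment of the enclosure problem in \cite{Acosta-Montalto-2015}.

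The second ingredient is a quantitative smallness of the thermal field, which is where Assumption~\ref{Assump.FastHeatDepo} enters decisively. Setting $w^\epsilon = p^\epsilon - p^0$, one checks that $w^\epsilon$ solves $c^{-2}\partial_t^2 w^\epsilon - \Delta w^\epsilon = \epsilon\,\Delta\theta^\epsilon$ with vanishing initial data and the same impedance condition. I would estimate $\theta^\epsilon$ by parabolic maximal regularity applied to $\partial_t\theta - \alpha\Delta\theta = \epsilon\,\partial_t p^\epsilon$ with $\partial_\nu\theta=0$ and $\theta(0)=\epsilon p_0$. By Lemma~\ref{Lemma.Forward}, $E(t)\le E(0) = \tfrac{1+\epsilon^2}{2}\|\nabla p_0\|_{L^2(\Omega)}^2$, so $\|\partial_t p^\epsilon\|_{L^2((0,\tau)\times\Omega)} \le C\|p_0\|_{H^1(\Omega)}$ uniformly in $\epsilon$; since the forcing $\epsilon\,\partial_t p^\epsilon$ and the datum $\epsilon p_0$ are both $O(\epsilon)$, maximal regularity yields $\|\theta^\epsilon\|_{L^2((0,\tau);H^2(\Omega))} + \|\partial_t\theta^\epsilon\|_{L^2((0,\tau)\times\Omega)} \le C\epsilon\|p_0\|_{H^1(\Omega)}$. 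In particular $\|\Delta\theta^\epsilon\|_{L^2((0,\tau)\times\Omega)} \le C\epsilon\|p_0\|_{H^1(\Omega)}$, so the forcing of the $w^\epsilon$--equation has $L^2$ norm $\le C\epsilon^2\|p_0\|_{H^1(\Omega)}$. Thus the thermoelastic back--reaction on the pressure is genuinely of second order in $\epsilon$, which is the structural reason the perturbation closes.

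The third step converts this smallness of the forcing into smallness of the boundary signal. I would establish a source--to--trace bound $\|w^\epsilon|_{(0,\tau)\times\Gamma}\|_{H^1((0,\tau)\times\Gamma)} \le C\,\|\epsilon\Delta\theta^\epsilon\|_{L^2((0,\tau)\times\Omega)}$ for the impedance wave equation with vanishing initial data, giving $\|(\Mop-\Mop_0)p_0\|_{H^1((0,\tau)\times\Gamma)} = \|w^\epsilon|_\Gamma\|_{H^1} \le C\epsilon^2\|p_0\|_{H^1(\Omega)}$. Combined with Step~1 via the triangle inequality, $\|\Mop p_0\|_{H^1} \ge \|\Mop_0 p_0\|_{H^1} - C\epsilon^2\|p_0\|_{H^1} \ge (C_0^{-1}-C\epsilon^2)\|p_0\|_{H^1}$. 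Choosing $\epsilon_{\rm o}$ so that $C\epsilon_{\rm o}^2 \le (2C_0)^{-1}$ gives $\|p_0\|_{H^1(\Omega)} \le 2C_0\,\|\Mop p_0\|_{H^1((0,\tau)\times\Gamma)}$ for all $0\le\epsilon<\epsilon_{\rm o}$, with a constant independent of $\epsilon$; injectivity of $\Mop$ is then immediate.

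The hard part will be the source--to--trace estimate of Step~3 at the $H^1((0,\tau)\times\Gamma)$ level, since the energy regularity of $w^\epsilon$ produced by an $L^2$ forcing delivers a Dirichlet trace only in $C([0,\tau];H^{1/2}(\Gamma))$, weaker than the required space--time $H^1$; and because $p_0\in H^1(\Omega)$ only, I cannot differentiate the system in time to gain regularity, as $\partial_t^2 p(0)=c^2\Delta p_0$ fails to lie in $L^2(\Omega)$. I would resolve this by exploiting the favorable structure of the forcing: using the heat equation, $\epsilon\,\Delta\theta^\epsilon = \alpha^{-1}\partial_t\bigl(\epsilon(\theta^\epsilon-\epsilon p^\epsilon)\bigr)$, a pure time derivative of a field that is $O(\epsilon^2)$ in $L^2$ and vanishes at $t=0$ by Assumption~\ref{Assump.FastHeatDepo}. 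Integrating by parts in time in the Duhamel representation of $w^\epsilon$ transfers this time derivative onto the wave propagator, so that the boundary trace estimate only needs the energy--level sharp trace regularity of the impedance wave group, applied against an $L^2$ field rather than against $\Delta\theta^\epsilon$. This trace regularity, valid on $\Gamma$ under Assumption~\ref{Assump.001}, I would take from the microlocal theory of \cite{Bar-Leb-Rau-1992,GLLT-2004} (sharp hidden trace regularity for second--order hyperbolic problems).
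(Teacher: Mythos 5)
Your first two steps coincide with the paper's own argument: the parabolic maximal--regularity bound $\| \Delta \theta \|_{H^{0}((0,\tau)\times\Omega)} \leq C\epsilon \| p_{0}\|_{H^{1}(\Omega)}$, obtained from the forcing $\epsilon\partial_{t}p$ and the datum $\theta_{0}=\epsilon p_{0}$ together with the energy bound on $\partial_{t}p$, is exactly (\ref{Eqn.HeatEstimate})--(\ref{Eqn.EnergyEstimate}), and the quadratic-in-$\epsilon$ back--reaction is indeed the structural reason the argument closes. The divergence --- and the gap --- is in how the wave observability is deployed. The paper applies a \emph{perturbed} observability inequality directly to the actual solution $p$, viewed as a forced wave equation with interior source $\epsilon c^{2}\Delta\theta$, namely (\ref{Eqn.WaveEstimate}):
\begin{eqnarray*}
\| p_{0} \|^{2}_{H^{1}(\Omega)} \leq C \left( \epsilon^{2} \| \Delta \theta \|^{2}_{H^{0}((0,\tau) \times \Omega)} + \| p \|^{2}_{H^{1}((0,\tau) \times \Gamma)} \right).
\end{eqnarray*}
Here the source enters only through its interior $L^{2}$ norm and the boundary trace of $p$ sits on the majorizing side, so no upper bound on any boundary trace is ever required; absorbing the resulting $O(\epsilon^{4})$ term finishes the proof.

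You instead decompose $p=p^{0}+w^{\epsilon}$ and must bound the measurement perturbation \emph{from above}: $\| w^{\epsilon}|_{(0,\tau)\times\Gamma}\|_{H^{1}((0,\tau)\times\Gamma)} \leq C \| \epsilon\Delta\theta \|_{L^{2}((0,\tau)\times\Omega)}$. This reverse--direction, source--to--trace estimate is the genuine gap, as you partly anticipate. An $L^{2}((0,\tau)\times\Omega)$ source produces only a finite--energy solution, whose Dirichlet trace under a Neumann/impedance condition is controlled by sharp hidden regularity in spaces like $H^{2/3}$ of the lateral boundary (and in $C([0,\tau];H^{1/2}(\Gamma))$ by the trace theorem), not in the full space--time $H^{1}$ norm of the measurement topology; in particular nothing controls $\nabla_{\Gamma}w^{\epsilon}$ and $\partial_{t}w^{\epsilon}$ in $L^{2}((0,\tau)\times\Gamma)$ on the portion of $\Gamma$ where $\gamma=0$. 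Your proposed repair --- writing $\epsilon c^{2}\Delta\theta=\partial_{t}g$ with $g=\epsilon c^{2}\alpha^{-1}(\theta-\epsilon p)$ and $g(0)=0$, so that $w^{\epsilon}=\partial_{t}v$ for $v$ solving the wave equation with source $g$ --- does not close the argument either: controlling $\partial_{t}w^{\epsilon}=\partial_{t}^{2}v=g+c^{2}\Delta v$ on $(0,\tau)\times\Gamma$ requires the boundary trace of $\Delta v$, which is not available at the energy level. The remedy is to abandon the decomposition and invoke the forced--equation observability inequality as the paper does; it is proved directly (by multipliers or microlocally) with the source paired against the interior energy, which renders your Step 3 unnecessary.
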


\begin{proof}[\textbf{Proof of Theorem \ref{Thm.MainResult}}]
Consider the problem (\ref{Eqn:Pressure01})--(\ref{Eqn:BdyCond01}) for $p_{0} \in H^{1}(\Omega)$ and $p_{1} = 0$ and $\theta_{0} = \epsilon p_{0}$ (Assumption \ref{Assump.FastHeatDepo}). From Theorem \ref{Thm.Forward} we have that the solution to this problem satisfies $p \in C^{k}([0,\tau] ; H^{1-k}(\Omega))$ for $k=0,1$ and $\theta \in C([0,\tau];H^{1}(\Omega))$. Now notice that $\theta$ has initial condition in $H^{1}(\Omega)$, homogeneous Neumann boundary condition, and a forcing term $\epsilon \partial_{t} p \in C([0,\tau];H^{0}(\Omega)) \subset H^{0}((0,\tau);H^{0}(\Omega))$. Hence, from regularity theory for the parabolic equation (see \cite[Thm 4.3, \S 4, Ch 4, Vol II]{Lio-Mag-Book-1972} or \cite[\S 7.1.3]{EvansPDE}), we obtain that $\theta \in H^{0}((0,\tau);H^{2}(\Omega)) \cap H^{1}((0,\tau);H^{0}(\Omega))$ and an estimate of the form
\begin{eqnarray}
 \| \Delta \theta \|^{2}_{H^{0}((0,\tau) \times \Omega)} \leq \epsilon^2 C \left( \|  \partial_{t} p \|^{2}_{H^{0}((0,\tau) \times \Omega)} + \| \nabla p_{0} \|^{2}_{H^{0}(\Omega)} \right), \label{Eqn.HeatEstimate}
\end{eqnarray}
for some constant $C>0$ independent of $\epsilon \geq 0$. From the energy estimate in Theorem \ref{Thm.Forward}, we also have that
\begin{eqnarray}
 \| \partial_{t} p \|^{2}_{H^{0}((0,\tau) \times \Omega)} \leq C (1 + \epsilon^2) \| \nabla p_{0} \|^{2}_{H^{0}(\Omega)}. \label{Eqn.EnergyEstimate}
\end{eqnarray}
Now, under the geometric condition in Assumption \ref{Assump.001}, the acoustic problem for $p$ (governed by the wave equation (\ref{Eqn:Pressure01}) with source term $\epsilon c^2 \Delta \theta$) enjoys the following observability property (see details in \cite[Ch 7]{Tucsnak-Weiss-Book-2009}, \cite[Ch 6]{Glow-Lions-He-2008}, \cite[Lemma 3.3]{Alabau-Boussouira2013a} and \cite{Bar-Leb-Rau-1992,GLLT-2004}),
\begin{eqnarray}
\| p_{0} \|^{2}_{H^{1}(\Omega)} \leq C \left( \epsilon^2 \| \Delta \theta \|^{2}_{H^{0}((0,\tau) \times \Omega)} + \| p \|^{2}_{H^{1}((0,\tau) \times \Gamma)} \right), \label{Eqn.WaveEstimate}
\end{eqnarray}
where $C>0$ is also independent of $\epsilon \geq 0$. 
Combining the above three inequalities, we  obtain that
\begin{eqnarray*}
\| p_{0} \|^{2}_{H^{1}(\Omega)}  \leq C \left(  \| p \|^{2}_{H^{1}((0,\tau) \times \Gamma)} + \epsilon^4 (2 + \epsilon^2) \| \nabla p_{0} \|^{2}_{H^{0}(\Omega)} \right).
\end{eqnarray*}
Therefore, we select $\epsilon_{\rm o}$ so that $C \epsilon_{\rm o}^4 (2 + \epsilon_{\rm o}^2) < 1$. Then for any $0 \leq \epsilon < \epsilon_{\rm o}$, the second term on the right--hand side of the above inequality can be absorbed into the left--hand side to obtain the desired estimate. This concludes the proof.
\end{proof}


\section{Reconstruction Algorithm} \label{Section:Reconst}

In this section we explicitly recover the initial acoustic profile $p_{0}$ in terms of the boundary measurements $\Mop p_{0} = p|_{(0,\tau) \times \Gamma}$. This is accomplished by using Theorem \ref{Thm.MainResult} obtained in the previous section which leads to the invertibility of the normal operator $(\Mop^{*}\Mop)$.

In order to obtain an applicable expression for the operator $\Mop^{*}$, in this section we state the dual or adjoint problem associated with (\ref{Eqn:Pressure01})--(\ref{Eqn:BdyCond01}). This is equivalent to constructing the well--known Hilbert Uniqueness Method (HUM) for control of partial differential equations. See \cite{Lions-Review-1988,Glow-Lions-He-2008} for an overview of these ideas and their historical origin. Throughout, we assume that $\epsilon > 0$ is sufficiently small for Theorem \ref{Thm.MainResult} to apply. This adjoint problem is to find a solution $(\psi,\xi)$ (defined by transposition as in \cite[Ch 3, \S 9]{Lio-Mag-Book-1972} or \cite[\S 4]{Bar-Leb-Rau-1992}) for the following IBVP,
\begin{eqnarray}
\partial_{t}^{2} \psi - c^2 \Delta \psi - \epsilon \, c^2 \alpha^{-1}\partial_{t} \xi = 0 \qquad && \mbox{in} \quad (0,\tau) \times \Omega, \label{Eqn:Adj_Pressure01}  \\
\partial_{t} \xi + \alpha \Delta \xi - \epsilon \, \alpha \Delta \psi = 0 \qquad && \mbox{in} \quad (0,\tau) \times \Omega, \label{Eqn:Adj_Temp01} \\
\psi = 0, \quad  \partial_{t} \psi = 0  \quad \text{and} \quad \xi = 0 \qquad && \mbox{on} \quad \{ t = \tau \} \times \Omega, \label{Eqn:Adj_InitialCond01} \\
\partial_{\nu} \psi - \gamma \partial_{t} \psi  = \eta \quad \text{and} \quad \partial_{\nu} \xi - \epsilon \partial_{\nu} \psi = 0 \qquad && \mbox{on} \quad (0,\tau) \times \partial \Omega. \label{Eqn:Adj_BdyCond01}
\end{eqnarray}
for a given $\eta \in H^{-1}((0,\tau) \times \Gamma)$ (extended as zero on $(0,\tau) \times \partial \Omega \setminus \Gamma$). Notice that this problem is solved backwards in time with vanishing Cauchy data at time $t = \tau$ and that the signs of the terms $\partial_{t} \xi$ and $\alpha \Delta \xi$ are consistent with solving the heat equation backwards in time in a stable manner. In fact, the well--posedness of the dual system (\ref{Eqn:Adj_Pressure01})--(\ref{Eqn:Adj_BdyCond01}) is equivalent to the well--posedness of the primal system (\ref{Eqn:Pressure01})--(\ref{Eqn:BdyCond01}). See \cite[Ch 3, \S 9]{Lio-Mag-Book-1972}, \cite[\S 2.8]{Tucsnak-Weiss-Book-2009} and \cite{Bar-Leb-Rau-1992,Lions-Review-1988} for details. We obtain the following definition.

\begin{definition} \label{Def:SolutionAdj}
Let $\Sop$ be the mapping $\eta \mapsto - \partial_{t} \psi |_{t=0}$, where $\psi$ is the solution of (\ref{Eqn:Adj_Pressure01})--(\ref{Eqn:Adj_BdyCond01}) for the provided $\eta$.
\end{definition}
Integrating by parts the terms of equations (\ref{Eqn:Pressure01})--(\ref{Eqn:Temp01}) against $(\psi,\xi)$, where $(\psi,\xi)$ is the solution of (\ref{Eqn:Adj_Pressure01})--(\ref{Eqn:Adj_BdyCond01}), we easily obtain that
\begin{eqnarray*}
\la p_{0} , \Sop \eta \ra_{\Omega} =  \la \Mop p_0 , \eta \ra_{(0,\tau) \times \Gamma }, \quad \text{for all $\eta \in H^{-1}((0,\tau) \times \Gamma)$ and $p_{0} \in H^{1}(\Omega)$.}
\end{eqnarray*}
Hence, by definition we have that $\Mop^{*} = \Rop \Sop \Qop^{-1} : H^{1}((0,\tau) \times \Gamma) \to H^{1}(\Omega)$ where $\Rop : H^{-1}(\Omega) \to H^{1}(\Omega)$ and $\Qop : H^{-1}((0,\tau) \times\Gamma) \to H^{1}((0,\tau) \times\Gamma)$ are the Riesz representation unitary operators. Now, if we choose $\eta = \Qop^{-1} \Mop p_{0}$ and use the estimate from Theorem \ref{Thm.MainResult}, we obtain that,
\begin{eqnarray*}
 \la  p_{0} , (\Mop^{*} \Mop) p_{0} \ra_{H^{1}(\Omega)} = \| \Mop p_{0} \|^{2}_{H^{1}((0,\tau)\times \Gamma)} \geq C \| p_{0} \|^{2}_{H^{1}(\Omega)},
\end{eqnarray*}
for all $p_{0} \in H^{1}(\Omega)$ and some constant $C>0$. Therefore, the operator $\Mop^{*} : H^{1}((0,\tau) \times \Gamma) \to H^{1}(\Omega)$ is surjective and $(\Mop^{*} \Mop) : H^{1}(\Omega) \to H^{1}(\Omega)$ is coercive. With these results, we can establish the following controllability theorem.

\begin{theorem}[\textbf{Acoustic Control}] \label{Thm.Control}
Let the Geometric Condition \ref{Assump.001} hold. For sufficiently small $\epsilon > 0$, the operator $\Sop : H^{-1}((0,\tau) \times \Gamma) \to  H^{-1}(\Omega)$ given in Definition \ref{Def:SolutionAdj} is surjective. Therefore, for any $\phi \in H^{-1}(\Omega)$, there exists a boundary control $\eta \in H^{-1}((0,\tau) \times \Gamma)$ such that the solution $(\psi, \xi)$ of (\ref{Eqn:Adj_Pressure01})--(\ref{Eqn:Adj_BdyCond01}) satisfies
\begin{eqnarray*}
\partial_{t}\psi = -\phi, \qquad \text{at time $t=0$.}
\end{eqnarray*}
Among all such boundary controls, there exists $\eta_{\rm min}$ which is uniquely determined by $\phi$ as the minimum norm control and satisfies the following stability condition
\begin{eqnarray*}
\| \eta_{\rm min} \|_{H^{-1}((0,\tau) \times \Gamma)} \leq C \| \phi \|_{H^{-1}(\Omega)}
\end{eqnarray*}
for some constant $C > 0$. As a consequence, the mapping $\phi \mapsto \eta_{\rm min}$ defines a bounded control operator $\Cop : H^{-1}(\Omega) \to H^{-1}((0,\tau) \times \Gamma)$, that satisfies $\Cop = \Sop^{*}(\Sop \Sop^{*})^{-1}$. It also follows that $\Qop \Cop \Rop^{-1} = \Mop (\Mop^{*} \Mop)^{-1}$.
\end{theorem}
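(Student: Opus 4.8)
The plan is to build directly on the factorization $\Mop^{*} = \Rop \Sop \Qop^{-1}$ established just above, together with the surjectivity of $\Mop^{*}$ and the coercivity of $(\Mop^{*}\Mop)$, and then to deploy the classical Hilbert Uniqueness Method duality. First I would rewrite the factorization as $\Sop = \Rop^{-1} \Mop^{*} \Qop$. Since $\Rop$ and $\Qop$ are the Riesz (isometric) isomorphisms between the respective $H^{-1}$ and $H^{1}$ spaces, and $\Mop^{*} : H^{1}((0,\tau) \times \Gamma) \to H^{1}(\Omega)$ is surjective, the composition $\Sop : H^{-1}((0,\tau) \times \Gamma) \to H^{-1}(\Omega)$ is surjective as well. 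This immediately yields the controllability statement: for any $\phi \in H^{-1}(\Omega)$ there is $\eta$ with $\Sop \eta = \phi$, which by Definition \ref{Def:SolutionAdj} means $-\partial_{t}\psi|_{t=0} = \phi$, i.e. $\partial_{t}\psi = -\phi$ at $t=0$.

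Next I would produce the minimum--norm control. The key analytic fact is that surjectivity of the bounded operator $\Sop$ between Hilbert spaces is equivalent, via the closed range theorem, to coercivity of $\Sop\Sop^{*}$, namely $\la \Sop\Sop^{*} v , v \ra = \| \Sop^{*} v \|^{2} \geq c \| v \|^{2}$ for some $c > 0$. Hence $\Sop\Sop^{*}$ is bounded, self--adjoint, and boundedly invertible by Lax--Milgram. I would then set $\eta_{\rm min} = \Sop^{*}(\Sop\Sop^{*})^{-1}\phi$, which satisfies $\Sop \eta_{\rm min} = \phi$ and lies in the range of $\Sop^{*}$, equal to the orthogonal complement of the kernel of $\Sop$. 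Writing any other control as $\eta = \eta_{\rm min} + \zeta$ with $\zeta \in \ker \Sop$ gives $\| \eta \|^{2} = \| \eta_{\rm min} \|^{2} + \| \zeta \|^{2}$, so $\eta_{\rm min}$ is the unique minimizer; the stability bound $\| \eta_{\rm min} \| \leq C \| \phi \|$ follows from boundedness of $(\Sop\Sop^{*})^{-1}$ (with $C = \| \Sop^{*} \| / c$). This exhibits the bounded control operator $\Cop = \Sop^{*}(\Sop\Sop^{*})^{-1}$.

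Finally I would verify the operator identity $\Qop \Cop \Rop^{-1} = \Mop (\Mop^{*}\Mop)^{-1}$. The crucial bookkeeping step is that the Riesz maps satisfy $\Rop^{*} = \Rop^{-1}$ and $\Qop^{*} = \Qop^{-1}$ (the sense in which they are unitary), which I would obtain by comparing the duality pairing $\la f , v \ra = \la \Rop f , v \ra_{H^{1}}$ with the definition of the $H^{-1}$ inner product $\la f , g \ra_{H^{-1}} = \la \Rop f , \Rop g \ra_{H^{1}}$. Using these together with $(\Mop^{*})^{*} = \Mop$, I compute $\Sop^{*} = \Qop^{-1}\Mop\Rop$, whence $\Sop\Sop^{*} = \Rop^{-1}(\Mop^{*}\Mop)\Rop$ and $(\Sop\Sop^{*})^{-1} = \Rop^{-1}(\Mop^{*}\Mop)^{-1}\Rop$. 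Substituting gives $\Cop = \Qop^{-1}\Mop(\Mop^{*}\Mop)^{-1}\Rop$, and conjugating by $\Qop$ on the left and $\Rop^{-1}$ on the right yields the claimed identity.

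I expect the main obstacle to be the careful tracking of adjoints across the several $H^{1}$/$H^{-1}$ dualities --- in particular establishing $\Rop^{*} = \Rop^{-1}$ and $\Qop^{*} = \Qop^{-1}$ and keeping the inner products consistent when computing $\Sop^{*}$ --- rather than the abstract control argument, which is the standard HUM construction once surjectivity of $\Sop$ and coercivity of $\Sop\Sop^{*}$ are in hand.
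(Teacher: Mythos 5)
Your proposal is correct and follows essentially the same route as the paper, which presents Theorem \ref{Thm.Control} as a direct consequence of the preceding facts ($\Mop^{*} = \Rop \Sop \Qop^{-1}$, surjectivity of $\Mop^{*}$, coercivity of $\Mop^{*}\Mop$) via the standard HUM minimum--norm construction. Your explicit computation of $\Sop^{*} = \Qop^{-1}\Mop\Rop$ and the resulting identity $\Qop\Cop\Rop^{-1} = \Mop(\Mop^{*}\Mop)^{-1}$ correctly fills in the bookkeeping the paper leaves implicit.
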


Let $(\psi,\xi)$ be the solution of (\ref{Eqn:Adj_Pressure01})--(\ref{Eqn:Adj_BdyCond01}) with $\eta = \Cop \phi$ and $\phi \in H^{-1}(\Omega)$ arbitrary. Then by construction, 
\begin{eqnarray*}
\la p_{0} , \phi \ra_{H^{1}(\Omega) \times H^{-1}(\Omega)} =  \la  \Mop p_0 , \Cop \phi \ra_{H^{1}((0,\tau) \times \Gamma) \times H^{-1}((0,\tau) \times \Gamma)},
\end{eqnarray*}
for all $\phi \in H^{-1}(\Omega)$, which implies that the unknown initial condition $p_{0}$ is explicitly recovered as follows,
\begin{eqnarray}
p_{0} = \Cop^{*} \Mop p_0, \label{Eqn.Solution}
\end{eqnarray}
where $\Cop^{*} : H^{1}((0,\tau) \times \Gamma) \to H^{1}(\Omega)$ is the adjoint of the control operator $\Cop : H^{-1}(\Omega) \to H^{-1}((0,\tau) \times \Gamma)$ defined in Theorem \ref{Thm.Control}. 
The reconstruction algorithm is based on the identity (\ref{Eqn.Solution}) and an iterative algorithm to approximate the action of $\Cop^{*}$. This algorithm is based on the following points provided by Theorem \ref{Thm.Control} (cf. \cite{Lions-Review-1988,Glow-Lions-He-2008}):
\begin{enumerate}
\item[(1)] The observability operator $\Cop^{*} =  (\Mop^{*} \Mop)^{-1} \Mop^{*} $, where $(\Mop^{*} \Mop) : H^{1}(\Omega) \to H^{1}(\Omega)$ is coercive.
\item[(2)] For $\zeta \in H^{1}(\Omega)$, the solution to $(\Mop^{*} \Mop) \phi = \zeta$ can be approximated using the conjugate gradient method.
\end{enumerate}

Now we proceed to describe how the action of $\Cop^{*}$ can be approximated using the conjugate gradient method. See \cite[\S 4.6]{Atkinson-Han-Book-2001} for a standard description of the conjugate gradient method in a Hilbert space setting. For sake of completeness, we describe the inversion of a generic equation $(\Mop^{*} \Mop) \phi = \zeta$. Let $\phi_{0}$ be an initial guess for the true solution $\phi_{*}$. Define $r_{0} = \zeta - (\Mop^{*} \Mop) \phi_{0}$ as the initial residue and $s_{0} = r_{0}$. For $k \geq 0$, define
\begin{eqnarray*}
\phi_{k+1} = \phi_{k} + \alpha_{k} s_{k}, & \qquad \text{where} \quad \alpha_{k} = \frac{\| r_{k} \|^{2}_{H^{1}(\Omega)}  }{ \la  s_{k} , (\Mop^{*} \Mop) s_{k} \ra_{H^{1}(\Omega)}  } & \\
r_{k+1} = \zeta - (\Mop^{*} \Mop) \phi_{k+1} \\
s_{k+1} = r_{k+1} + \beta_{k} s_{k}, & \qquad \text{where} \quad \beta_{k} = \frac{\| r_{k+1} \|^{2}_{H^{1}(\Omega)} }{\| r_{k} \|^{2}_{H^{1}(\Omega)} }.
\end{eqnarray*}
Since the operator $(\Mop^{*} \Mop) : H^{1}(\Omega) \to H^{1}(\Omega)$ is bounded and coercive, then there are positive constants $m$ and $M$ such that
\begin{eqnarray*}
m \| \phi \|^{2}_{H^{1}(\Omega)} \leq \la  \phi , (\Mop^{*} \Mop) \phi \ra_{H^{1}(\Omega)} \leq M \| \phi \|^{2}_{H^{1}(\Omega)}.
\end{eqnarray*}
The conjugate gradient iterates can be shown to converge as follows (see \cite[\S 4.6]{Atkinson-Han-Book-2001} and references therein),
\begin{eqnarray}
\fl \| \phi_{*} - \phi_{k} \|_{H^{1}(\Omega)} \leq e^{ - \sigma k} \| \phi_{*} - \phi_{0} \|_{H^{1}(\Omega)}, \qquad \text{for $k \geq 0$, where} \quad \sigma = \ln \left( \frac{M+m}{M-m} \right). \label{Eqn:ConvergenceCG}
\end{eqnarray}

Notice that at each iteration, one must apply the operator $(\Mop^{*} \Mop)$ which amounts to solve the problem (\ref{Eqn:Pressure01})--(\ref{Eqn:BdyCond01}) (under Assumption \ref{Assump.FastHeatDepo}) followed by solving the adjoint problem (\ref{Eqn:Adj_Pressure01})--(\ref{Eqn:Adj_BdyCond01}). In practice, this can be approximated using numerical methods for PDEs. However, depending on the method of choice, there are intrinsic complications that may prevent a convergence estimate such as (\ref{Eqn:ConvergenceCG}) from being satisfied in the limit as the discretization is refined. We shall not elaborate any further on these complications as they lie outside of the scope of this paper. For details on these numerical issues we refer to \cite{Glow-Lions-He-2008,Ervedoza-Zuazua-2010,Ervedoza-2009,Asch-Lebeau-1998,Ervedoza-Zuazua-Book-2013} and references therein. In this paper, we adopted the two--grid approach described in \cite{Asch-Lebeau-1998,Glow-Lions-He-2008} using second order finite difference methods. For the two--grid approach, recall that the computation of residual $r_{k+1} = \zeta - (\Mop^{*} \Mop) \phi_{k+1}$ is understood in the $H^{1}(\Omega)$--sense which means that $r_{k+1}$ solves the equation $\la \nabla (\zeta - r_{k+1}) , \nabla v \ra_{\Omega} = F(v)$ for all $v \in H_{0}^{1}(\Omega)$ where $F = (\Mop^{*} \Mop) \phi_{k+1}$ acts as a functional. This elliptic equation is solved using a grid that is coarser than the grid employed to propagate the wave fields. This computation on a coarser grid has a filtering effect which removes high--frequency oscillations from the residual. In turn, this procedure regulates the convergence of the algorithm as the grids are refined \cite{Asch-Lebeau-1998,Glow-Lions-He-2008}.


\section{Numerical Results} \label{Section:Numerics}
Now we present some numerical results to illustrate the performance of the reconstruction algorithm described in Section \ref{Section:Reconst}. We implemented a numerical solver for the governing system (\ref{Eqn:Pressure01})--(\ref{Eqn:BdyCond01}) and its adjoint (\ref{Eqn:Adj_Pressure01})--(\ref{Eqn:Adj_BdyCond01}) based on second order finite differences. To avoid spurious numerical instabilities, we adopted the two--grid approach described in \cite{Asch-Lebeau-1998,Glow-Lions-He-2008}. We worked in $\mathbb{R}^2$ where the domain $\Omega$ was taken as the unit--square. The initial profile $p_{0}$ corresponds the Shepp--Logan phantom.

We present two examples. One with constant wave speed $c(x) \equiv 1$, and the other with variable wave speed $c=c(x)$ defined below. In both cases, we used the following parameters: impedance $\gamma(x) = c^{-1}(x)$ over the boundary of $\Omega$, thermal diffusivity $\alpha = 0.01$, and coupling parameter $\epsilon = 0.1$. The observability time was chosen to be $\tau = 2$ which is enough for more than $99 \%$ of the energy contained in the initial profile to dissipate or leave the domain through the boundary when the wave speed is constant.

We shall compare the results from the proposed algorithm against the results from purely acoustic time--reversal. The latter is accomplished by producing measurements using the thermoacoustic forward solver $\Mop$, and then back--propagating the boundary measurements in a purely acoustic medium ($\epsilon = 0$), that is, by ignoring the thermodynamic attenuation. See details in \cite{Acosta-Montalto-2015,Ste-Uhl-2009-01} for the purely acoustic time--reversal approach. The acoustic time--reversal is approximated using the same finite difference method. The initial guess for the conjugate gradient algorithm is the approximate solution obtained from the purely acoustic time--reversal algorithm. Although the proposed reconstruction algorithm has been described in the $H^{1}(\Omega)$ setting, a similar study could be performed in the $H^{0}(\Omega)$ setting where the inner--products in the conjugate gradient algorithm would need to be understood appropriately. In this section, we present results from the implementation both in the $H^{1}(\Omega)$ and $H^{0}(\Omega)$ formulations.

\subsection{Constant wave speed}
For the first example where $c \equiv 1$, Figure \ref{Fig.Comparison01} displays the exact initial profile and the reconstructions. The relative errors in the $H^{1}(\Omega)$ and $H^{0}(\Omega)$ formulations are reported in Table \ref{Table.Ex1} for the first few iterations of the conjugate gradient algorithm. We notice that by ignoring the thermodynamic attenuation in the purely acoustic time--reversal reconstruction (Iter $=0$), the edges in the Shepp--Logan phantom are blurred considerably. Some of the sharpness is recovered by accounting for the attenuation in the proposed algorithm even after a single iteration.

\begin{table}[h]
\centering
\caption{\label{Table.Ex1} Constant wave speed example. Relative error at each iteration of the conjugate gradient method described in Section \ref{Section:Reconst}. Iter $=0$ corresponds to the initial guess given by a purely acoustic time--reversal algorithm.}
\begin{tabular}{@{}crr}
\br
Iter & $H^{1}(\Omega)$--norm & $H^{0}(\Omega)$--norm \\
\mr
0 & 52.6 \% & 31.1 \% \\ 
1 & 19.8 \% & 12.8 \% \\ 
2 & 10.6 \% & 5.7 \% \\ 
3 & 6.3 \% &  4.4 \% \\ 
4 & 4.5 \% & 3.8 \% \\ 
5 & 3.8 \% & 3.1 \% \\ 
\br
\end{tabular}
\end{table}

\begin{figure}[h]
\centering
\includegraphics[height=0.33 \textheight, trim=80 10 10 10]{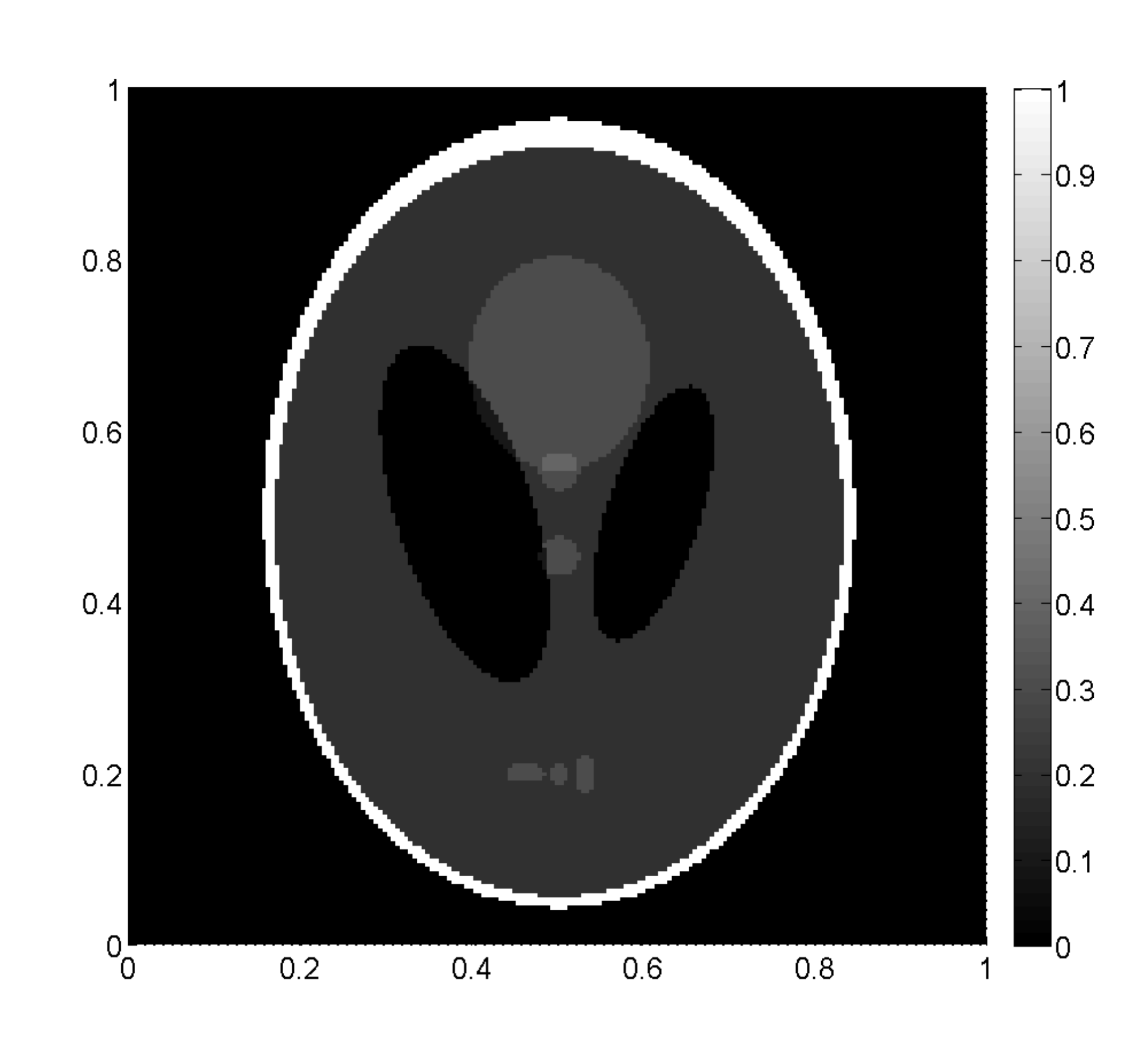} 
\includegraphics[height=0.33 \textheight, trim=10 10 50 10]{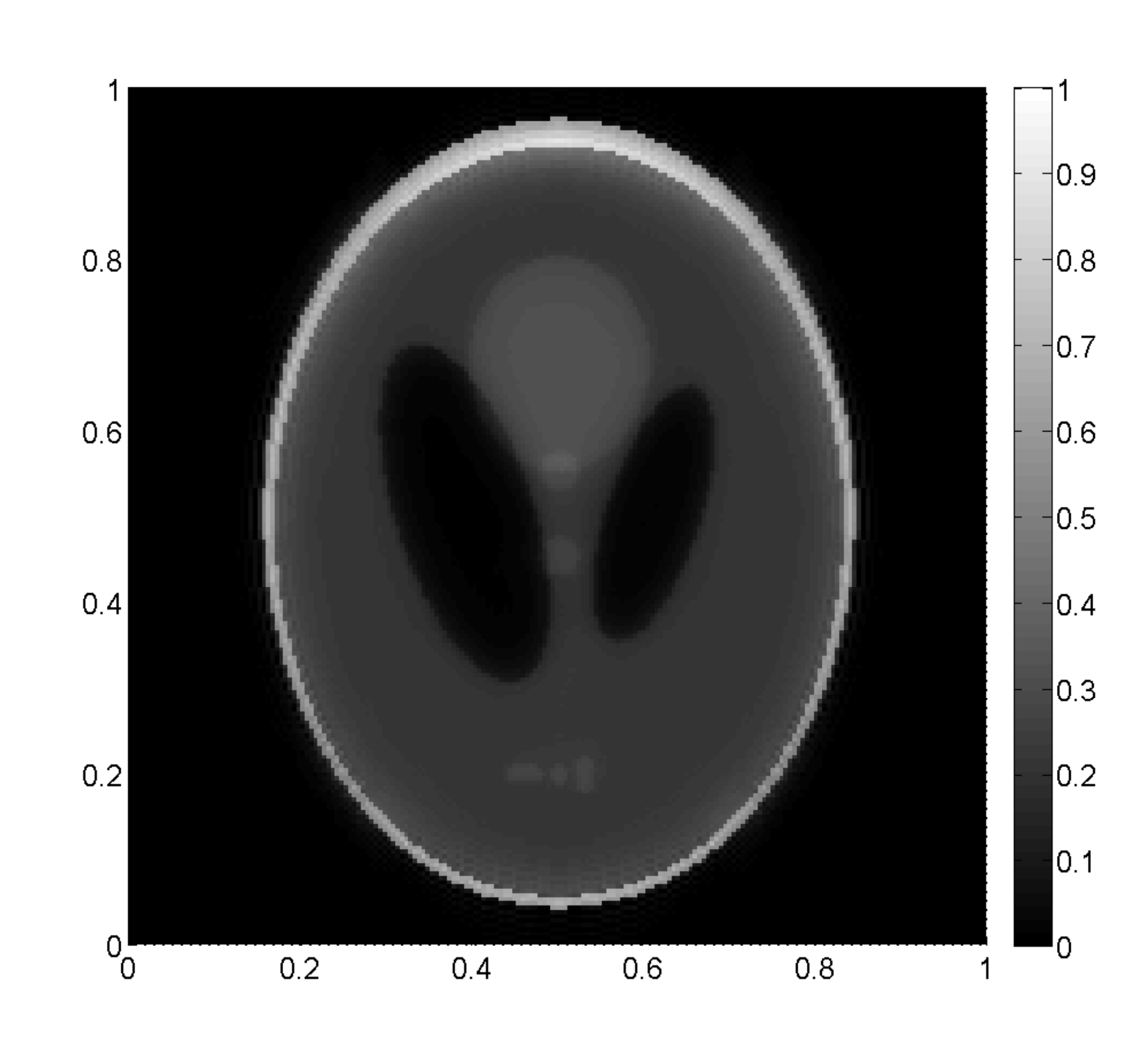} \\
\includegraphics[height=0.33 \textheight, trim=80 10 10 10]{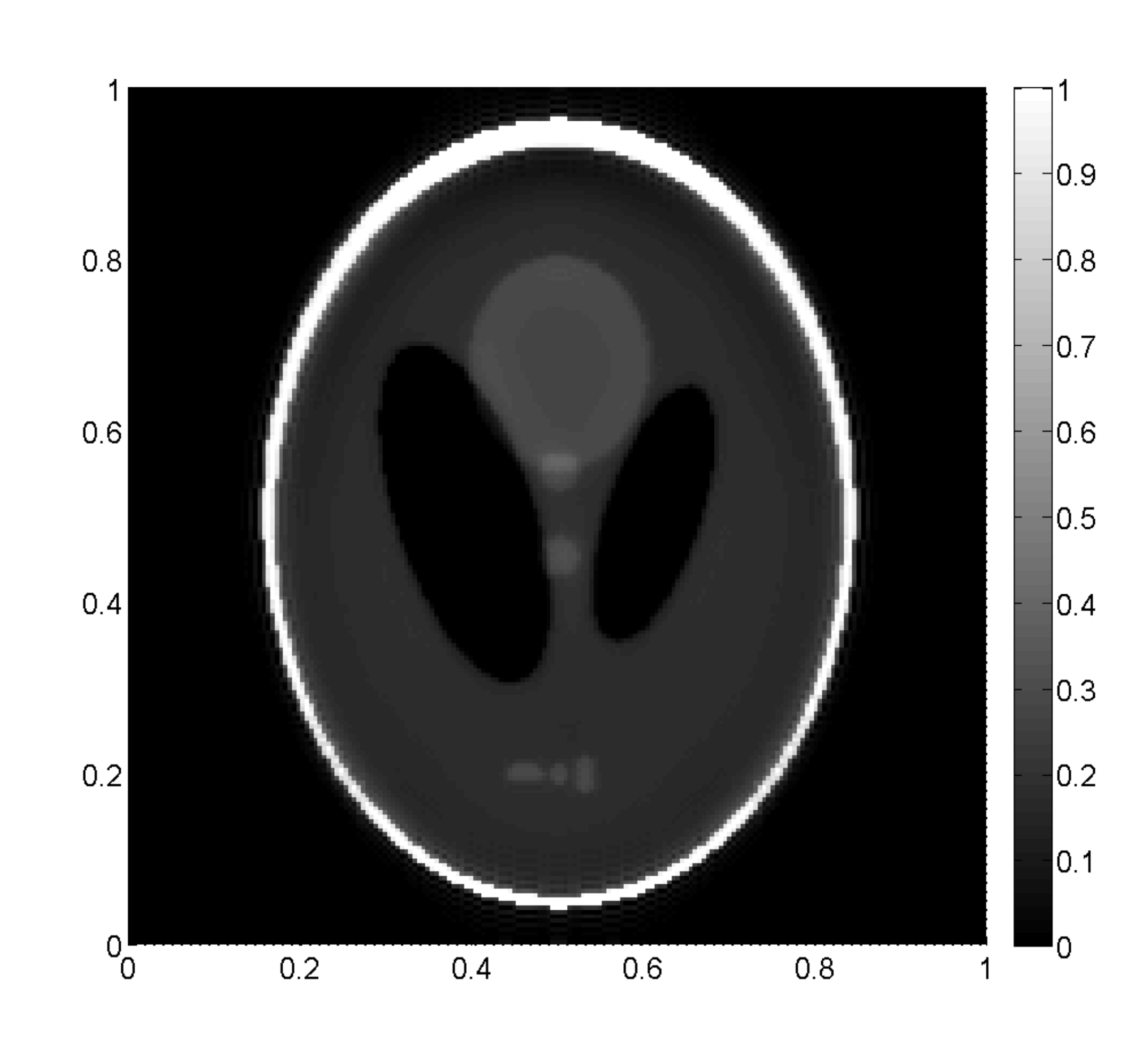} 
\includegraphics[height=0.33 \textheight, trim=10 10 50 10]{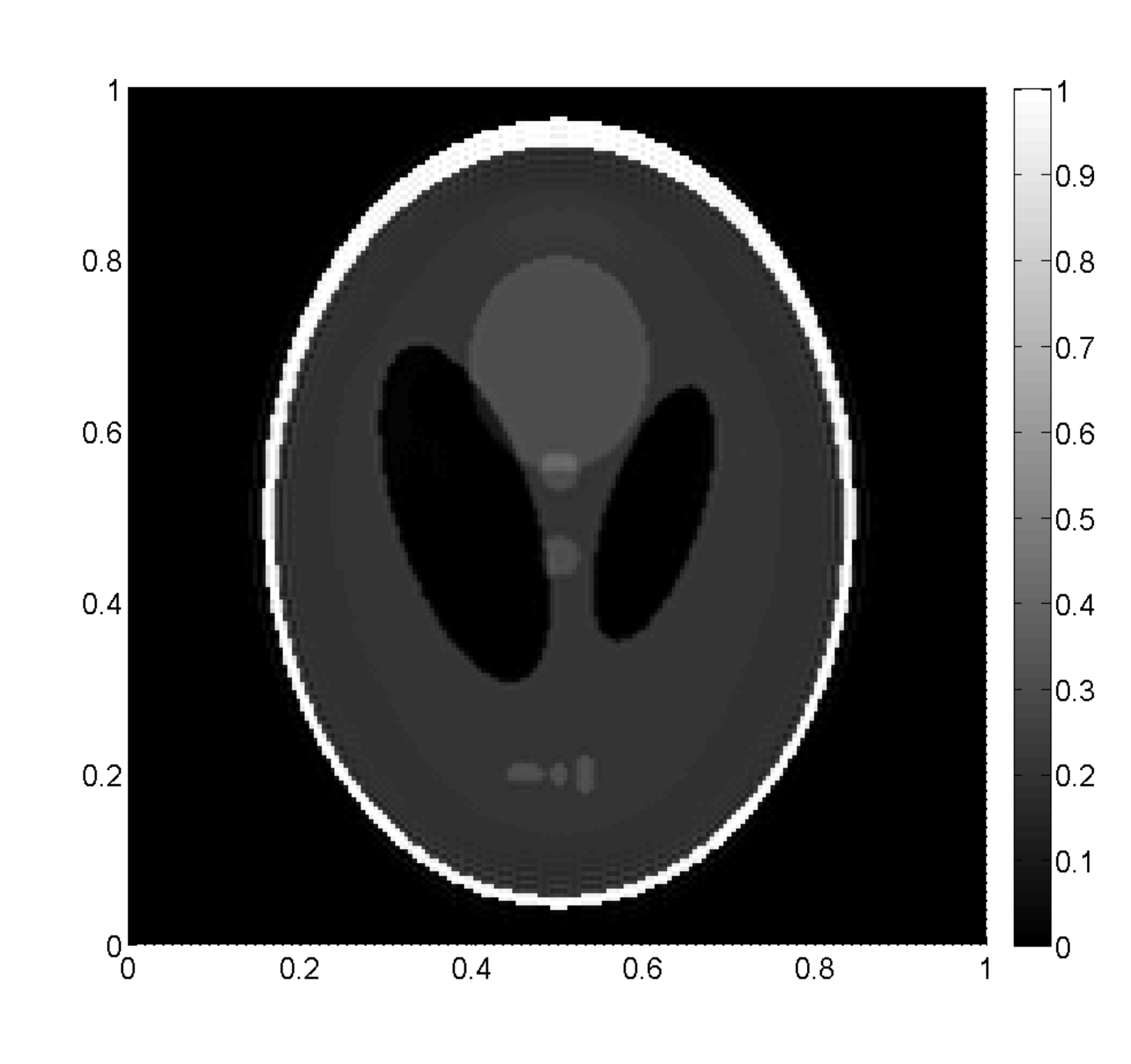} 
\caption{Exact initial acoustic profile (top--left), the reconstruction from purely acoustic time-reversal (top--right), and the reconstruction from the proposed algorithm described in Section \ref{Section:Reconst} using $1$ iteration (bottom--left) and $5$ iterations (bottom--right).}
\label{Fig.Comparison01}
\end{figure}

\subsection{Variable wave speed}
For the second example we have selected a variable wave speed defined as a layer of higher speed surrounding the smaller ellipses in the Shepp--Logan phantom. The actual profile is illustrated in the top--right panel of Figure \ref{Fig.Comparison02}. The relative errors in the $H^{1}(\Omega)$ and $H^{0}(\Omega)$ formulations are reported in Table \ref{Table.Ex2} for the first few iterations of the conjugate gradient algorithm. Again, as shown in the lower panels of Figure \ref{Fig.Comparison02}, we see great improvements over the purely acoustic time--reversal reconstruction. We highlight the ability in capturing the jump discontinuities and the reduction of the artifacts introduced by ignoring the attenuation.

\begin{table}[h]
\centering
\caption{\label{Table.Ex2} Variable wave speed example. Relative error at each iteration of the conjugate gradient method described in Section \ref{Section:Reconst}. Iter $=0$ corresponds to the initial guess given by a purely acoustic time--reversal algorithm.}
\begin{tabular}{@{}crr}
\br
Iter & $H^{1}(\Omega)$--norm & $H^{0}(\Omega)$--norm \\
\mr
0 & 55.4 \% & 34.3 \% \\ 
1 & 24.2 \% & 16.6 \% \\ 
2 & 15.5 \% & 8.0 \% \\ 
3 & 11.8 \% & 4.7 \% \\ 
4 & 10.2 \% & 4.0 \% \\ 
5 & 9.7 \% & 3.7 \% \\ 
\br
\end{tabular}
\end{table}

\begin{figure}
\centering
\includegraphics[height=0.33 \textheight, trim=80 10 10 10]{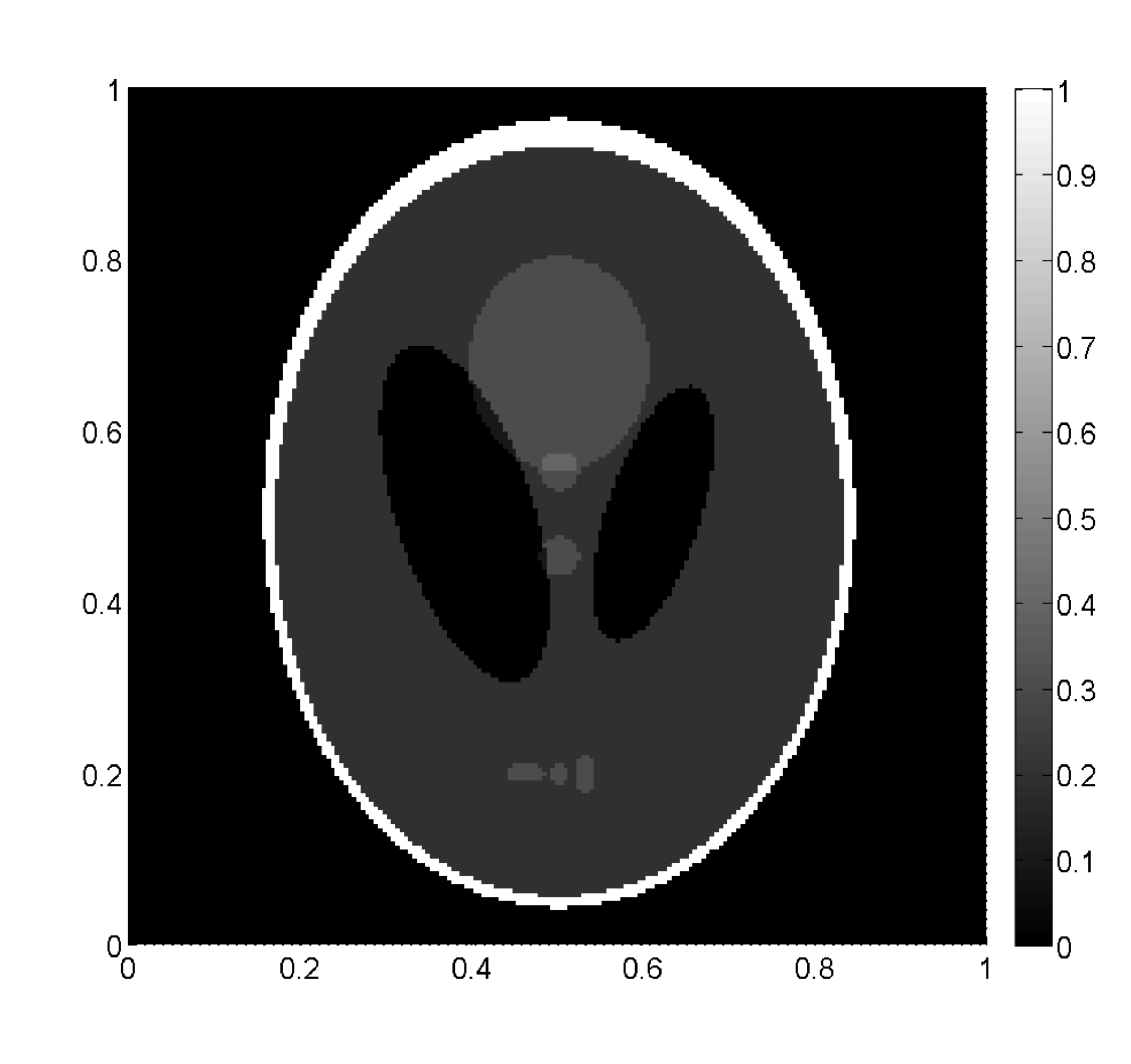} 
\includegraphics[height=0.33 \textheight, trim=10 10 50 10]{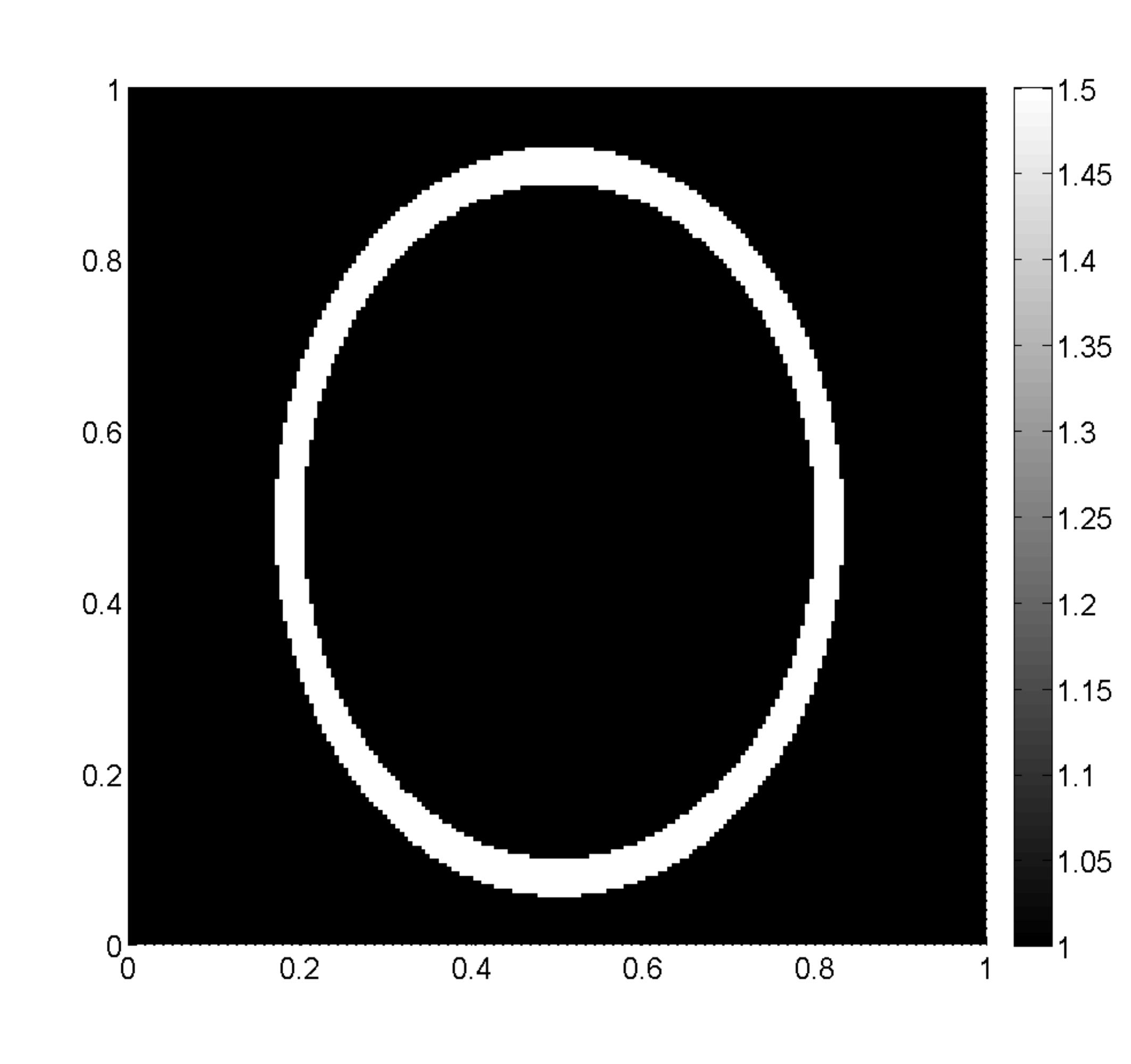} \\
\includegraphics[height=0.33 \textheight, trim=80 10 10 10]{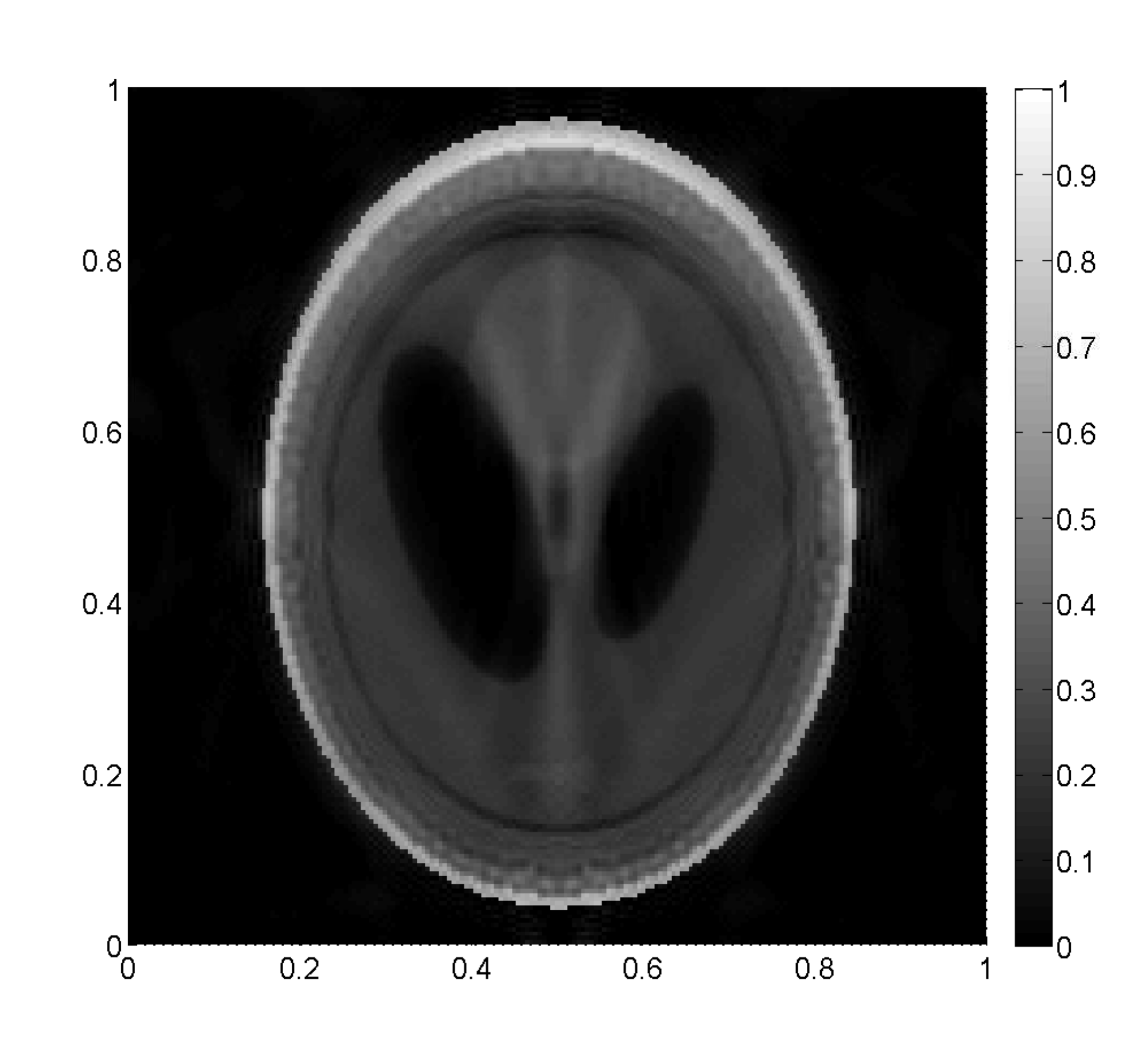} 
\includegraphics[height=0.33 \textheight, trim=10 10 50 10]{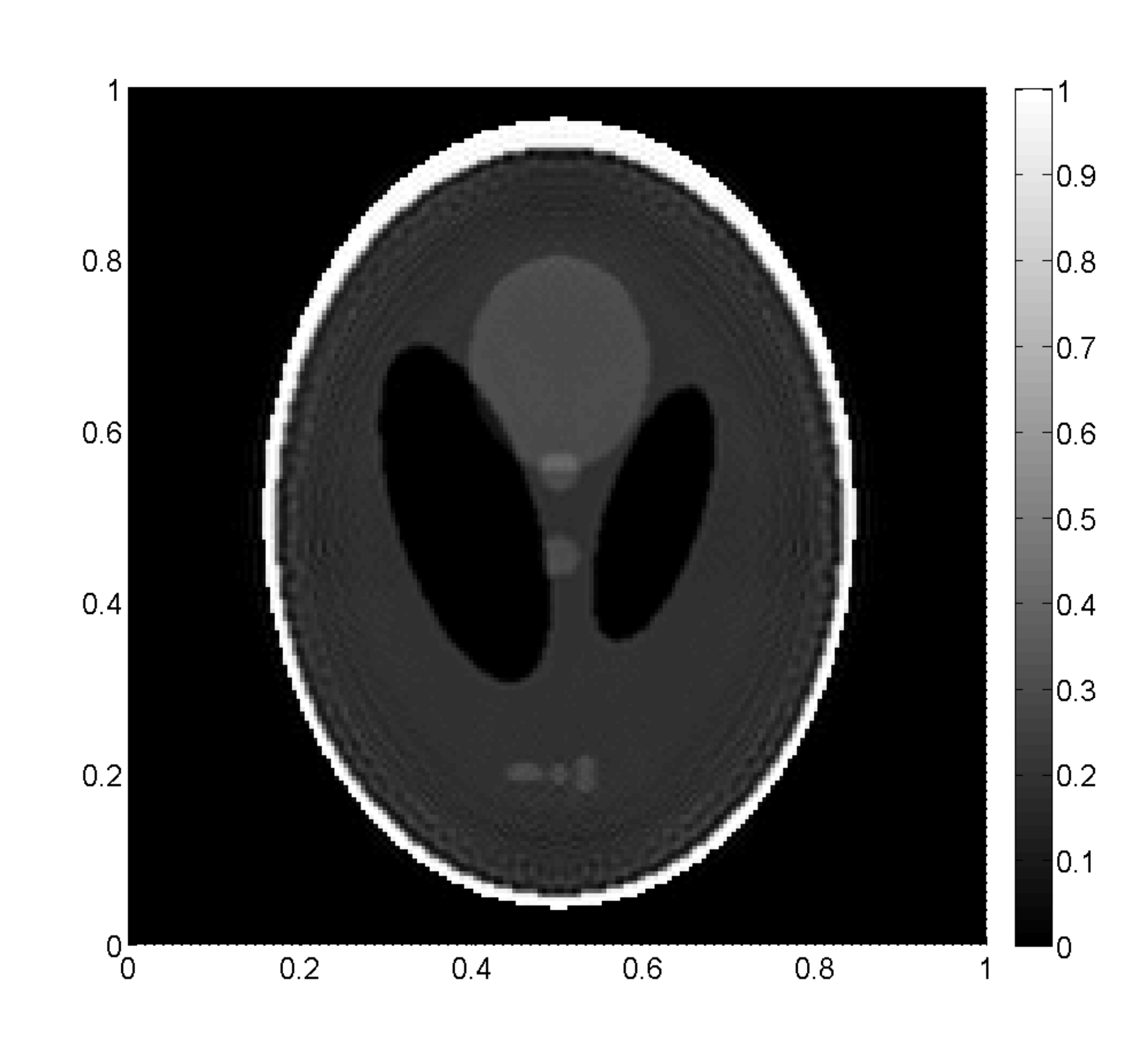} 
\caption{Exact initial acoustic profile (top--left), wave speed profile 
(top--right), the reconstruction from purely acoustic time--reversal (bottom--left) and the reconstruction from the proposed algorithm described in Section \ref{Section:Reconst} using $5$ iterations (bottom--right).}
\label{Fig.Comparison02}
\end{figure}


\section{Conclusion} \label{Section:Discussion}

We have presented a PAT/TAT model based on thermoelasticity. The thermoelastic coupling accounts for how pressure changes can induce temperature changes in a body and vice versa. This coupling between temperature and deformation is a fundamental feature of PAT/TAT. The current literature dealing with PAT/TAT only considers one side of the thermoelastic interaction (the photoacoustic effect). By considering both effects simultaneously we account for a natural attenuation phenomenon.

We related the thermoelastic model of PAT/TAT with boundary observability for the thermacoustic system. We showed uniqueness and stability of recovering the initial pressure profile from boundary data provided that the thermoelastic coupling is weak. The recovery analysis of the initial wave profile is valid under a geometric assumption on the wave speed (see Assumption \ref{Assump.001}). We also proposed a reconstruction algorithm based on the conjugate gradient method. We carried out proof--of--concept numerical simulations to illustrate the implementation of the reconstruction algorithm for synthetic data. The authors are in the process of applying the proposed algorithm to actual experimental data. As soon as meaningful results are obtained from these efforts, they will be reported in a forthcoming publication.

For soft biological tissues, the unitless coupling  parameter $\epsilon$ of the thermoelastic model is approximately between 0.05 and 0.1 (as obtained from Table \ref{Table:Param}). Theorem \ref{Thm.MainResult} requires $\epsilon$ to be sufficiently small. Given that in PAT/TAT the thermodynamic interaction is small, such a condition on $\epsilon$ is reasonable. Nonetheless, it might be possible to remove this condition by using Carleman estimates for the coupled thermoelastic system (e.g., \cite{Eller2000,Albano-Tataru-2000,GLLT-2004}). The attenuation experienced by the shear waves has not been included in the present work. This may become relevant when the pressure waves interact with solid layers such as the skull \cite{Schoonover-Anas-2011,Huang-Nie-2012,Schoonover-2012}. In that case, it may be appropriate to incorporate the thermodynamic attenuation into the full elastic model of PAT/TAT \cite{Tittelfitz-2012}.


\section*{Acknowledgments} \label{Sec:Acknowledgements}
The authors would like to thank Plamen Stefanov for recommendations on the first draft of the paper. We also want to thank Benjam\'in Palacios for fruitful discussions and for exploring the possibility of a Neumann series approach for this problem.


\section*{References}

\bibliographystyle{unsrt}
\bibliography{Biblio}

\end{document}